\definecolor{astral}{RGB}{46,116,181}
\newtheorem{theorem}{Theorem}[section]
\newtheorem{definition}[theorem]{Definition}
\definecolor{darkslategray}{rgb}{0.18, 0.31, 0.31}
\definecolor{warmblack}{rgb}{0.0, 0.26, 0.26}
\definecolor{thrdfc}{rgb}{0.36, 0.54, 0.66}
\definecolor{bole}{rgb}{0.55, 0.71, 0.0}
 \journal{...}
\newcommand{\mb}{\mathbb}
\newcommand{\dg}{{\dagger}}
\begin{document}
\begin{frontmatter}
\title{ \textcolor{warmblack}{\bf Hartwig-Spindelb\"ock decomposition of dual complex matrix}}
\author{Aaisha Be$^{\dag, a}$, Debasisha Mishra{$^{\dag, b}$}}
\address{$^{\dag}$Department of Mathematics,\\
 National Institute of Technology Raipur,\\
 Raipur, Chhattisgarh, India.\\
 \textit{E-mail$^a$}: \texttt{aaishasaeed7\symbol{'100}gmail.com}\\
 \textit{E-mail$^b$}: \texttt{dmishra\symbol{'100}nitrr.ac.in.}}

                                 \begin{abstract}
This article introduces the Hartwig-Spindelb\"{o}ck decomposition of dual complex matrices. We provide representations of some generalized inverses using this decomposition. Further, several characterizations are established for a complex matrix to be Hermitian, normal and new dual EP matrix.
\end{abstract}
                            \begin{keyword}
Hartwig-Spindelb\"{o}ck decomposition, Dual matrix, Dual SVD, NDMPI.                            \vspace{0.2cm}\\
{\it Mathematics Subject Classification}: 15A09, 15A23, 15A66.
 \end{keyword} 
\end{frontmatter}
 \newpage
                                          \section{Introduction}\label{sec:intro}  
In 1873, William Clifford \cite{clifford1873} introduced the notion of dual numbers. Dual numbers and dual matrices are significantly used in kinematic analysis, robotics, rigid body motion, computers, synthesis, mechanics, etc. (see \cite{daniilidis1998, fischer2017, pennestri2009, pennestri2018, wang2012}). A {\it{dual number}} $\hat{a}\in \mb{D}$ is represented as $\hat{a}=a_s+ a_d\epsilon,~ \text{where}~ a_s,~a_d \in \mb{R}$ are called the {\it{standard}} and {\it{dual (or infinitesimal)}} part of $\hat{a}$, respectively and $\epsilon$ is an infinitesimal unit such that $\epsilon^2=0$. $\hat{a}$ is called a {\it{dual complex number (dual quaternion number)}} if $a_s,~a_d\in \mb{C}(\mb{Q})$. A matrix having entries as dual numbers is called a {\it{dual matrix}}. Dual complex and dual quaternion matrices are also similarly defined.
Matrix decomposition is an essential tool in matrix theory. Generally, we decompose a matrix for easy computation purposes \cite{trefthen1997}. Using different matrix decompositions, several beautiful representations of generalized inverses are given \cite{israel2003}. In particular, there is plenty of research on the representations of generalized inverses and their applications \cite{baksalary2008, baksalary2009, malik2014} via the Hartwig-Spindelb\" ock decomposition. This decomposition was introduced by Hartwig and Spindelb\" ock \cite{hartwig1983} in 1983. It also gives numerous characterizations to Hermition, normal, EP matrices, etc.  Many researchers have recently been attracted by dual matrix decompositions and have made tremendous attempts in this direction \cite{cui2025, qi2023a, qi2023b, wang2024, xu2024.NFAO, xu2024.AML, xu2024.CAM} with the applicability of these decompositions in different areas.
This paper aims to introduce the Hartwig-Spindelb\" ock decomposition of dual complex matrices. Using this decomposition, this paper also represents different generalized inverses of dual complex matrices, like Moore-Penrose inverse, group inverse, etc. 

The {\it inverse} of a dual complex matrix  $\hat{A}=A_s+ A_d\epsilon\in \mb{DC}^{n \times n}$ with invertible $A_s\in \mb{C}^{n \times n}$ is defined as 
 $$\hat{A}^{-1}=A_s^{-1}-A_s^{-1}A_dA_s^{-1}\epsilon.$$ 
 Pennestri and Valentini introduced it \cite{pennestri2009} in 2009. Using the Moore-Penrose inverse \cite{israel2003} of the standard part $A_s$ of a dual matrix $\hat{A}=A_s+ A_d\epsilon\in\mb{DC}^{m\times n}$, Pannestri and Stefanelli \cite{pennestri2007} defined the {\it Moore-Penrose dual generalized inverse (MPDGI)} in 2007.
 For any $\hat{A}=A_s+ A_d\epsilon\in\mb{DC}^{m\times n}$, 
$$\hat{A}^P=A_s^{\dg}- A_s^{\dg}A_d A_s^{\dg}\epsilon,$$ 
where $A_s^{\dg}$ is the Moore-Penrose inverse \cite{israel2003} of $A_s$.
Then, in 2018, de Falco {\it{et al.}} \cite{falco2018} introduced the {\it dual Moore-Penrose generalized inverse (DMPGI)}. For $\hat{A}\in \mb{DC}^{m \times n},$ if there exists an $\hat{X}\in \mb{DC}^{n \times m}$ satisfying
$$\hat{A}\hat{X}\hat{A}=\hat{A},~\hat{X}\hat{A}\hat{X}=\hat{X},~(\hat{A}\hat{X})^*=\hat{A}\hat{X},~ \text{and}~(\hat{X}\hat{A})^*=\hat{X}\hat{A},$$
then $\hat{X}$ is called the DMPGI of $\hat{A}$. It is denoted by $\hat{A}^{\dg}$. For the existence of the DMPGI, the authors studied several necessary and sufficient conditions, as the DMPGI may not exist for every dual matrix. After that, in 2025, Cui and Qi \cite{cui2025} introduced the {\it new dual Moore-Penrose inverse (NDMPI)}, which exists for every dual matrix (see Section \ref{sec:Preliminaries} for more details). In 2022, Zhong and Zhang \cite{zhong2022} established the {\it dual group generalized inverse (DGGI)}. The definition of DGGI is stated next. For $\hat{A} \in \mb{DC}^{n \times n}$, if there exists a dual matrix $\hat{X} \in \mb{DC}^{n \times n}$ satisfying
$$\hat{A}\hat{X}\hat{A}=\hat{A}, \hat{X}\hat{A}\hat{X}=\hat{X}, \text{ and } \hat{A}\hat{X}=\hat{X}\hat{A},$$
then $\hat{A}$ is called a DGGI invertible matrix, and $\hat{X}$ is the DGGI of $\hat{A},$ which is denoted as $\hat{A}^{\#}.$ \\
This article aims to study the Hartwig-Spindelb\" ock decomposition for dual complex matrices. To do this, the rest of the article's structure is as follows. The next section provides the background knowledge of dual numbers and matrices. In Section \ref{thm:DHS dec}, the Hartwig-Spindelb\" ock decomposition is introduced. Then, it is used to give new representations of some generalized inverses.   
Also, some necessary and sufficient conditions are provided for a dual complex matrix to be Hermitian, normal, new dual EP, etc. Section \ref{sec:cncln} concludes the main findings of the article.  
                                       \section{Preliminaries}\label{sec:Preliminaries}
This section provides some basic terminologies and techniques of dual matrices that will be used throughout this article.
$\mb{R},~\mb{C},~\mb{Q},~\mb{D},~\mb{DC}$ and $\mb{DQ}$ denote the set of all real numbers, complex numbers, quaternion numbers, dual numbers, dual complex numbers and dual quaternion numbers, respectively. For $S\in \{\mb{R},~\mb{C},~\mb{Q},~\mb{D},~\mb{DC},~\mb{DQ}\}$, $S^{m \times n}$ denotes the set of all matrices of order $m \times n$ whose entries are coming from $S.$ A dual number is called {\it{appreciable}} if its standard part is nonzero; otherwise, it is called {\it{infnitesimal}}. In 2022, Qi {\it{et al.}} \cite{qi2022} defined the total order $``\leq"$ over $\mb{D}$ as follows: For $\hat{a}=a_s+ a_d\epsilon~\text{and}~\hat{b}=b_s+ b_d\epsilon \in \mb{D}$, we have
\begin{itemize}
\item $\hat{a}=\hat{b}$ if $a_s=b_s$ and $a_d=b_d,$
\item $\hat{a}<\hat{b}$ if $a_s<b_s$, or $a_s=b_s$ and $a_d< b_d,$
\item $\hat{a}\leq \hat{b}$ if $a_s<b_s$, or $a_s=b_s$ and $a_d\leq b_d.$
\end{itemize}
Therefore, a dual number $\hat{a}=a_s+ a_d\epsilon\in \mb{D}$ is called positive, nonnegative, negative and nonpositive if $\hat{a}>0,~\hat{a}\geq0,~\hat{a}<0,~\text{and}~\hat{a}\leq 0,$ respectively. For $\hat{a}=a_s+ a_d\epsilon\in \mb{DC},$ the conjugate tranpose of $\hat{a}$, powers of $\hat{a}$, the square root of $\hat{a}$ and the modulus of $\hat{a}$ have the following definitions respectively.
\begin{itemize}
\item $\hat{a}^*=\overline{\hat{a}}=\overline{a}_s+ \overline{a}_d\epsilon$, where $\overline{a}_s$ and $\overline{a}_d$ are the complex conjugates of $a_s$ and $a_d$, respectively.
\item $\hat{a}^n=a_s^n+na_s^{n-1}a_d\epsilon,$ for any positive integer $n.$ \label{itm:power of a}
\item If $\hat{a}$ is positive and appreciable, then $\sqrt{\hat{a}}=\sqrt{a_s}+\frac{a_d}{2\sqrt{a_s}}\epsilon.$
And if $\hat{a}=0$, then $\sqrt{\hat{a}}=0.$ 
\item $|\hat{a}|=\begin{cases}
|a_s|+\frac{a_s\overline{a}_d+{a_d}\overline{a}_s}{2|a_s|}\epsilon,&\text{if}~ a_s\neq 0,\\
|a_d|\epsilon,&\text{otherwise}.
\end{cases}$
\end{itemize}
A matrix $\hat{A}\in \mb{DC}^{n \times n}$ is called Hermitian, idempotent, normal and dual EP if $\hat{A}^*=\hat{A}$, $\hat{A}^2=\hat{A}$, $\hat{A}\hat{A}^*=\hat{A}^*\hat{A}$ and $\hat{A}\hat{A}^{\dg}=\hat{A}^{\dg}\hat{A}$, respectively.
For a dual complex matrix, the singular value decomposition (SVD) is defined as: 
                                     \begin{theorem}(Theorem $5.2$, \cite{qi2022arXiv})\label{thm:svd dc}\\
Let $\hat{A}\in \mb{DC}^{m\times n}$. Then, there exist dual complex unitary matrices $\hat{U}\in \mb{DC}^{m\times m}$ and $\hat{V}\in \mb{DC}^{n\times n}$ such that 
$$\hat{A}=\hat{U} \hat{\Sigma}\hat{V}^*=\hat{U}\begin{bmatrix}
\hat{\Sigma}_{t}& O\\
O &O
\end{bmatrix}\hat{V}^*,$$
where $ \hat{\Sigma}_{t}\in \mb{D}^{t\times t}$ is a dual diagonal matrix of the form \\
$$\hat{\Sigma}_{t}=diag(\hat{\mu}_1,\dots,\hat{\mu}_r,\hat{\mu}_{r+1},\dots,\hat{\mu}_t),$$
$r\leq t \leq \min\{m,n\}$, $\hat{\mu}_1\geq\dots\geq \hat{\mu}_r$ are positive appreciable dual numbers and $\hat{\mu}_{r+1}\geq \dots\geq \hat{\mu}_t$ are positive infinitesimal dual numbers. Counting possible multiplicities of the diagonal entries, the form of $\hat{\Sigma}_{t}$ is unique.
\end{theorem}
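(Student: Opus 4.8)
The plan is to build the decomposition in stages, separating the appreciable information carried by the standard part $A_s$ from the infinitesimal information carried by the dual part $A_d$, and to exploit throughout that every dual unitary matrix has the form $U_0(I+\epsilon K)$ with $U_0$ an ordinary unitary matrix and $K^*=-K$. First I would apply the ordinary complex SVD to $A_s$: with $r=\mathrm{rank}(A_s)$ and $A_s=U_0\begin{bmatrix}\Sigma_0 & O\\ O & O\end{bmatrix}V_0^*$, $\Sigma_0\in\mb{R}^{r\times r}$ positive diagonal and $U_0,V_0$ unitary, conjugating $\hat A$ by $U_0,V_0$ brings it to the block form $\hat A=U_0\left(\begin{bmatrix}\Sigma_0 & O\\ O & O\end{bmatrix}+\epsilon\begin{bmatrix}B_{11} & B_{12}\\ B_{21} & B_{22}\end{bmatrix}\right)V_0^*$ for some complex blocks $B_{ij}$.

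Next I would eliminate the off-diagonal dual blocks $\epsilon B_{12}$ and $\epsilon B_{21}$ by left/right multiplication with dual unitaries $I+\epsilon K$: since $\Sigma_0$ is invertible, the first-order equations $L_{21}\Sigma_0=-B_{21}$ and $\Sigma_0 R_{12}=-B_{12}$ are solvable, the remaining blocks of $L,R$ are then forced by anti-Hermicity (their diagonal blocks taken to be $O$), and one checks these moves leave $\epsilon B_{22}$ intact. This reduces $\hat A$ to $\mathrm{diag}(\Sigma_0+\epsilon\widetilde B_{11},\,\epsilon B_{22})$ up to dual unitary equivalence. The top-left block is a square dual matrix with nonsingular standard part, and I would show any such matrix admits a dual SVD $\hat P(\Sigma_0+\epsilon\widetilde B_{11})\hat Q^*=\mathrm{diag}(\hat\mu_1,\dots,\hat\mu_r)$ with every $\hat\mu_i$ positive and appreciable, by solving the order-$\epsilon$ equation for the dual parts of $\hat P,\hat Q$ and of the diagonal: the diagonal entries pin down the (real) dual corrections of the $\hat\mu_i$, while the off-diagonal entries yield $2\times2$ linear systems with determinant $\sigma_i^2-\sigma_j^2$. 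Finally, for the bottom-right block I would take the ordinary SVD $B_{22}=U_2\begin{bmatrix}\Sigma_2 & O\\ O & O\end{bmatrix}V_2^*$ with $\Sigma_2$ positive diagonal of size $k=\mathrm{rank}(B_{22})$; then $\epsilon B_{22}=U_2\,\mathrm{diag}(\epsilon\Sigma_2,O)\,V_2^*$ contributes $k$ positive infinitesimal singular values, and absorbing all unitary factors into $\hat U,\hat V$ produces the claimed form with $t=r+k$.

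For uniqueness I would pass to $\hat A^*\hat A=\hat V(\hat\Sigma^*\hat\Sigma)\hat V^*$, a Hermitian dual matrix whose ``eigenvalues'' are $\hat\mu_1^2\geq\dots\geq\hat\mu_r^2$ among the appreciable ones and $0$ otherwise; invoking the uniqueness of the spectral decomposition of Hermitian dual matrices fixes the $\hat\mu_i^2$, hence the appreciable $\hat\mu_i$ after taking positive square roots. Since $\hat\mu^2=0$ for any infinitesimal $\hat\mu$, the product $\hat A^*\hat A$ is blind to the infinitesimal singular values, so for those one must argue separately that, once the appreciable part has been split off, the residual $\epsilon$-block is determined up to ordinary unitary equivalence and hence its singular values and multiplicities are forced. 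I expect this last point, together with the case of repeated standard singular values inside the appreciable block, to be the main obstacle: when $\sigma_i=\sigma_j$ the $2\times2$ systems above degenerate, and one must first spend the freedom in the \emph{standard} unitary factors to diagonalize $\widetilde B_{11}$ within each repeated block before the first-order correction can be carried out. Tracking how this interacts with the earlier decoupling step, and checking that the resulting multiplicities are independent of all the choices made, is where the bulk of the careful bookkeeping lies.
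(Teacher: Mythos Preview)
The paper does not prove this theorem; it is quoted in the preliminaries as Theorem~5.2 of \cite{qi2022arXiv} and simply cited without argument, so there is no in-paper proof to compare yours against.

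That said, your outline is the standard perturbation-theoretic construction of the dual SVD and is sound in its broad strokes: reduce via the ordinary SVD of $A_s$, absorb the off-diagonal $\epsilon$-blocks with first-order unitary corrections $I+\epsilon K$, then treat the invertible top-left block and the purely infinitesimal bottom-right block separately. The difficulties you single out---repeated standard singular values forcing a preliminary block diagonalization of $\widetilde B_{11}$, and the separate uniqueness argument for the infinitesimal tail since $\hat A^*\hat A$ cannot see it---are exactly the right ones. One small gap worth naming: the statement requires $\hat\Sigma_t\in\mb{D}^{t\times t}$, i.e.\ \emph{real} dual diagonal entries. When you read off the dual correction to $\hat\mu_i$ from the $(i,i)$ entry of $\widetilde B_{11}$, that entry is a priori complex; you must still use the purely imaginary diagonal freedom in the skew-Hermitian generators $K_P,K_Q$ to cancel its imaginary part. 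Your sketch asserts the corrections are real without saying how this is arranged.
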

The matrix $\hat{\Sigma}\in \mb{DC}^{m\times n}$ can also be written as:
$$\hat{\Sigma}=\begin{bmatrix}
\hat{\Sigma}_{t}& O\\
O &O
\end{bmatrix}=\begin{bmatrix}
\hat{\Sigma}_{1}& O\\
O & \hat{\Sigma}_{2}
\end{bmatrix},$$
where $\hat{\Sigma}_{1}=diag(\hat{\mu}_1,\dots,\hat{\mu}_r)\in  \mb{D}^{r\times r}$ and $\hat{\Sigma}_{2}=diag(\hat{\mu}_{r+1},\dots,\hat{\mu}_t,0,\dots,0)\in  \mb{D}^{m-r\times n-r}.$ Using this representation of $\hat{\Sigma}$ the {\it{essential part}} and the {\it{nonessential part}} \cite{cui2025} of the matrix $\hat{A}\in \mb{DC}^{m\times n}$ are defined as 
$$A_e=\hat{U}\begin{bmatrix}
\hat{\Sigma}_{1}& O\\
O & O
\end{bmatrix}\hat{V}^*=\hat{U}\begin{bmatrix}
\Sigma_{1s}+\Sigma_{1d}\epsilon & O\\
O & O
\end{bmatrix}\hat{V}^*~\text{and} ~A_n=\hat{U}\begin{bmatrix}
O & O\\
O & \hat{\Sigma}_{2}
\end{bmatrix}\hat{V}^*=\hat{U}\begin{bmatrix}
O & O\\
O & \Sigma_{2d}
\end{bmatrix}\hat{V}^*\epsilon.$$
Therefore, $\hat{A}$ also has the representation, $\hat{A}=A_e+A_n.$ 
In 2025, Cui and Qi \cite{cui2025} introduced the NDMPI of a dual complex matrix as follows:
\begin{definition}(Definition $4.1$, \cite{cui2025})\label{def:ndmpi}\\
Let $\hat{A}\in \mb{DC}^{m\times n}.$ Then, the matrix $\hat{X}\in \mb{DC}^{n\times m}$ is called the NDMPI of $\hat{A}$ if it satisfies  the following four equations:
$$\hat{A}\hat{X}\hat{A}=A_e,~\hat{X}\hat{A}\hat{X}=\hat{X},~(\hat{A}\hat{X})^*=\hat{A}\hat{X},~ \text{and}~(\hat{X}\hat{A})^*=\hat{X}\hat{A}.$$
It is denoted by $\hat{A}^N.$ 
\end{definition}
Through the SVD of a dual complex matrix, the NDMPI can be expressed as follows:
                                       \begin{theorem}(Theorem $4.1$, \cite{cui2025})\label{thm:ndmpi dc}\\
 Let $\hat{A}\in \mb{DC}^{m\times n}$ and 
 $$\hat{A}
=\hat{U}\begin{bmatrix}
\hat{\Sigma}_{1}& O\\
O & \hat{\Sigma}_{2}
\end{bmatrix}\hat{V}^*,$$
be the SVD of $\hat{A}$. Then, the NDMPI of $\hat{A}$ is
$$\hat{A}^N=\hat{V}\begin{bmatrix}
\hat{\Sigma}_{1}^{-1} & O\\
O &O
\end{bmatrix}\hat{U}^*.$$
\end{theorem}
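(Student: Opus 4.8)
The plan is to check directly that
$$\hat{X}:=\hat{V}\begin{bmatrix}\hat{\Sigma}_1^{-1} & O\\ O & O\end{bmatrix}\hat{U}^*$$
satisfies the four equations of Definition \ref{def:ndmpi}, and then to invoke uniqueness of the NDMPI. First I would observe that $\hat{X}$ is well defined: each diagonal entry of $\hat{\Sigma}_1$ is a positive appreciable dual number, hence invertible in $\mb{D}$, so $\hat{\Sigma}_1^{-1}\in\mb{D}^{r\times r}$ exists; moreover $\hat{\Sigma}_1$ is a real dual diagonal matrix, so $\hat{\Sigma}_1^*=\hat{\Sigma}_1$ and $\hat{\Sigma}_1^{-1}\hat{\Sigma}_1=\hat{\Sigma}_1\hat{\Sigma}_1^{-1}=I_r$. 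I would also record the unitarity relations $\hat{U}^*\hat{U}=I_m$, $\hat{V}^*\hat{V}=I_n$, which drive the whole computation.

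The core is a block-matrix calculation. Cancelling the inner $\hat{V}^*\hat{V}$ and $\hat{U}^*\hat{U}$, one gets
$$\hat{A}\hat{X}=\hat{U}\begin{bmatrix}I_r & O\\ O & O\end{bmatrix}\hat{U}^*\qquad\text{and}\qquad\hat{X}\hat{A}=\hat{V}\begin{bmatrix}I_r & O\\ O & O\end{bmatrix}\hat{V}^*.$$
Since the middle factor is real (hence Hermitian) and $\hat{U},\hat{V}$ are unitary, both products are Hermitian, settling the third and fourth equations of Definition \ref{def:ndmpi}. Multiplying once more and using $\hat{A}=\hat{U}\hat{\Sigma}\hat{V}^*$ again yields $\hat{A}\hat{X}\hat{A}=\hat{U}\begin{bmatrix}\hat{\Sigma}_1 & O\\ O & O\end{bmatrix}\hat{V}^*$, which is exactly the essential part $A_e$ as recalled just before Definition \ref{def:ndmpi}; this is the first equation. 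Similarly $\hat{X}\hat{A}\hat{X}=\hat{V}\begin{bmatrix}\hat{\Sigma}_1^{-1} & O\\ O & O\end{bmatrix}\hat{U}^*=\hat{X}$, the second.

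To conclude that $\hat{X}$ is \emph{the} NDMPI (and not merely a solution), I would appeal to uniqueness: if $\hat{Y}$ satisfies $\hat{A}\hat{Y}\hat{A}=A_e$ and $\hat{Y}\hat{A}\hat{Y}=\hat{Y}$, then $\hat{A}\hat{Y}=A_e\hat{Y}$ and $\hat{Y}\hat{A}=\hat{Y}A_e$, whence $A_e\hat{Y}A_e=A_e$, $\hat{Y}A_e\hat{Y}=\hat{Y}$, and the two Hermitian conditions carry over verbatim; so every NDMPI of $\hat{A}$ is a Moore-Penrose-type inverse of $A_e$, which is unique by the standard purely algebraic argument. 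Hence $\hat{A}^N=\hat{X}$. I do not expect a real obstacle: the statement is essentially a verification. The only points needing care are keeping track of the identity-block sizes ($I_r$ in the middle, $I_m$ and $I_n$ from unitarity) and, for a self-contained write-up, filling in the uniqueness step sketched above.
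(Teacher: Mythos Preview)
Your verification is correct. However, the paper does not itself prove Theorem~\ref{thm:ndmpi dc}: it is quoted verbatim from \cite{cui2025} as a preliminary, so there is no in-paper proof to compare against. That said, your approach---direct block-matrix verification of the four defining equations in Definition~\ref{def:ndmpi} using the unitarity of $\hat{U}$ and $\hat{V}$ and the invertibility of $\hat{\Sigma}_1$---is exactly the style the paper adopts when it later proves the analogous representation of $\hat{A}^N$ via the Hartwig--Spindelb\"ock decomposition (the theorem immediately following Theorem~\ref{thm:DHS dec}), so the spirit matches. Your uniqueness paragraph, reducing any NDMPI of $\hat{A}$ to a DMPGI of $A_e$ and then invoking the standard algebraic uniqueness of the Moore--Penrose inverse, is a nice addition that neither this paper nor its citation mechanics require, but it is sound.
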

                                     \section{Hartwig-Spindelb\"ock decomposition of dual complex matrix}\label{sec:dual HSD}
This section produces the main results of the article. We first define the new dual EP matrix. After that, the Hartwig-Spindelb\"ock decomposition for dual complex matrices is introduced. Then, using this decomposition, we provide representations of the NDMPI of a matrix $\hat{A}\in \mb{DC}^{n\times n}$ and the group inverse of $A_e$, which are then used to characterize different classes of dual complex matrices. Here, we first introduce the notion of a new dual EP matrix.
                                                   \begin{definition}\label{def:new DEP}
A matrix $\hat{A}\in \mb{DC}^{n\times n}$ is called a new dual EP matrix if $$\hat{A}\hat{A}^N=\hat{A}^N\hat{A}.$$     
\end{definition}
If $\hat{A}^{\dg}$ exists, then the new dual EP matrix becomes a dual EP matrix \cite{wang2023}. 
The following is the Hartwig-Spindelb\"ock decomposition of dual complex matrices.                
                                                   \begin{theorem}\label{thm:DHS dec}
 Let $\hat{A}\in \mb{DC}^{n\times n}$. Then, 
 $$\hat{A} =\hat{U} \begin{bmatrix}
\hat{\Sigma}_{1}\hat{K} & \hat{\Sigma}_{1}\hat{L}\\
\hat{\Sigma}_{2}\hat{M} & \hat{\Sigma}_{2}\hat{N}
\end{bmatrix}\hat{U}^*,$$
where $\hat{K}\hat{K}^*+\hat{L}\hat{L}^*=I_r$, $\hat{K}\hat{M}^*+\hat{L}\hat{N}^*=O,$ 
$\hat{\Sigma}_{1}=diag(\hat{\mu}_1,\dots,\hat{\mu}_r)\in  \mb{D}^{r\times r},$ $\hat{\mu}_1\geq\dots\geq \hat{\mu}_r> 0,$ are appreciable dual numbers and  $\hat{\Sigma}_{2}=diag(\hat{\mu}_{r+1},\dots,\hat{\mu}_t,0\dots,0)\in  \mb{D}^{n-r\times n-r},$ $\hat{\mu}_{r+1}\geq \dots\geq \hat{\mu}_t>0$ are infinitesimal dual numbers. Furthermore, 
 $$A_e =\hat{U} \begin{bmatrix}
\hat{\Sigma}_{1}\hat{K} & \hat{\Sigma}_{1}\hat{L}\\
O & O
\end{bmatrix}\hat{U}^*.$$
\end{theorem}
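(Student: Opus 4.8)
The plan is to read the decomposition straight off the dual singular value decomposition of Theorem~\ref{thm:svd dc}, imitating the classical Hartwig--Spindelb\"ock construction while keeping track of the appreciable/infinitesimal block structure.

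Since $\hat{A}\in\mb{DC}^{n\times n}$ is square, Theorem~\ref{thm:svd dc} supplies dual complex unitary matrices $\hat{U},\hat{V}\in\mb{DC}^{n\times n}$ with
$$\hat{A}=\hat{U}\begin{bmatrix}\hat{\Sigma}_{1}& O\\ O & \hat{\Sigma}_{2}\end{bmatrix}\hat{V}^{*},$$
where $\hat{\Sigma}_{1}=diag(\hat{\mu}_1,\dots,\hat{\mu}_r)\in\mb{D}^{r\times r}$ has positive appreciable diagonal entries and $\hat{\Sigma}_{2}=diag(\hat{\mu}_{r+1},\dots,\hat{\mu}_t,0,\dots,0)\in\mb{D}^{(n-r)\times(n-r)}$ has positive infinitesimal diagonal entries $\hat{\mu}_{r+1},\dots,\hat{\mu}_t$ and the remaining entries zero. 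I would then set $\hat{W}:=\hat{V}^{*}\hat{U}$; being a product of dual complex unitary matrices, $\hat{W}$ is again dual complex unitary, so $\hat{W}\hat{W}^{*}=\hat{W}^{*}\hat{W}=I_n$. Partition $\hat{W}$ conformally with the splitting $r\mid(n-r)$ as
$$\hat{W}=\begin{bmatrix}\hat{K}& \hat{L}\\ \hat{M}& \hat{N}\end{bmatrix},\qquad \hat{K}\in\mb{DC}^{r\times r},\ \hat{L}\in\mb{DC}^{r\times(n-r)},\ \hat{M}\in\mb{DC}^{(n-r)\times r},\ \hat{N}\in\mb{DC}^{(n-r)\times(n-r)}.$$

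From $\hat{W}=\hat{V}^{*}\hat{U}$ we have $\hat{V}^{*}=\hat{W}\hat{U}^{*}$; substituting this into the dual SVD and carrying out the block multiplication gives
$$\hat{A}=\hat{U}\begin{bmatrix}\hat{\Sigma}_{1}& O\\ O & \hat{\Sigma}_{2}\end{bmatrix}\begin{bmatrix}\hat{K}& \hat{L}\\ \hat{M}& \hat{N}\end{bmatrix}\hat{U}^{*}=\hat{U}\begin{bmatrix}\hat{\Sigma}_{1}\hat{K}& \hat{\Sigma}_{1}\hat{L}\\ \hat{\Sigma}_{2}\hat{M}& \hat{\Sigma}_{2}\hat{N}\end{bmatrix}\hat{U}^{*},$$
which is the asserted form. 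Expanding $\hat{W}\hat{W}^{*}=I_n$ blockwise, the $(1,1)$ entry yields $\hat{K}\hat{K}^{*}+\hat{L}\hat{L}^{*}=I_r$ and the $(1,2)$ entry yields $\hat{K}\hat{M}^{*}+\hat{L}\hat{N}^{*}=O$. Finally, using the definition of the essential part together with the same substitution $\hat{V}^{*}=\hat{W}\hat{U}^{*}$,
$$A_e=\hat{U}\begin{bmatrix}\hat{\Sigma}_{1}& O\\ O & O\end{bmatrix}\hat{V}^{*}=\hat{U}\begin{bmatrix}\hat{\Sigma}_{1}& O\\ O & O\end{bmatrix}\begin{bmatrix}\hat{K}& \hat{L}\\ \hat{M}& \hat{N}\end{bmatrix}\hat{U}^{*}=\hat{U}\begin{bmatrix}\hat{\Sigma}_{1}\hat{K}& \hat{\Sigma}_{1}\hat{L}\\ O& O\end{bmatrix}\hat{U}^{*}.$$

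The computations are routine; the only points requiring care are that the block partition of $\hat{W}$ must match the partition of $\hat{\Sigma}$ produced by Theorem~\ref{thm:svd dc}, so that the products $\hat{\Sigma}_{1}\hat{K}$, $\hat{\Sigma}_{2}\hat{M}$, and so on, are well defined, and that products and conjugate transposes of dual complex unitary matrices are again dual complex unitary --- this last fact is precisely what delivers the relations $\hat{K}\hat{K}^{*}+\hat{L}\hat{L}^{*}=I_r$ and $\hat{K}\hat{M}^{*}+\hat{L}\hat{N}^{*}=O$. The appreciable/infinitesimal description of $\hat{\Sigma}_{1}$ and $\hat{\Sigma}_{2}$ is inherited verbatim from Theorem~\ref{thm:svd dc}, so no further work is needed there, and I do not expect any genuine obstacle.
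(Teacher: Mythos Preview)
Your proposal is correct and follows essentially the same approach as the paper: start from the dual SVD, set $\hat{W}=\hat{V}^{*}\hat{U}$, insert $\hat{U}\hat{U}^{*}$ (equivalently, substitute $\hat{V}^{*}=\hat{W}\hat{U}^{*}$), multiply out the block product, and read off the relations on $\hat{K},\hat{L},\hat{M},\hat{N}$ from the unitarity of $\hat{W}$. Your write-up is in fact slightly more explicit than the paper's about why $\hat{W}$ is unitary and about matching block sizes, but there is no substantive difference in method.
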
  
\begin{proof}
By the SVD of $\hat{A}$, we have 
$$\hat{A}=\hat{U}\begin{bmatrix}
            \hat{\Sigma}_{1}& O\\
            O & \hat{\Sigma}_{2}
        \end{bmatrix}\hat{V}^*.$$ 
Let $\hat{W}=\hat{V}^*\hat{U}=\begin{bmatrix}
    \hat{K} & \hat{L}\\
    \hat{M} & \hat{N}
    \end{bmatrix}.$ Then,  
$$\hat{A}=\hat{U}\begin{bmatrix}
            \hat{\Sigma}_{1}& O\\
            O & \hat{\Sigma}_{2}
\end{bmatrix}\hat{V}^*\hat{U}\hat{U}^*=\hat{U}\begin{bmatrix}
            \hat{\Sigma}_{1}& O\\
            O & \hat{\Sigma}_{2}
\end{bmatrix}\begin{bmatrix}
    \hat{K} & \hat{L}\\
    \hat{M} & \hat{N}
\end{bmatrix}\hat{U}^*=\hat{U}\begin{bmatrix}
    \hat{\Sigma}_{1}\hat{K} & \hat{\Sigma}_{1}\hat{L}\\
   \hat{\Sigma}_{2} \hat{M} & \hat{\Sigma}_{2}\hat{N}
\end{bmatrix}\hat{U}^*$$ and 
    $$A_e=\hat{U}\begin{bmatrix}
            \hat{\Sigma}_{1}& O\\
            O & O
\end{bmatrix}\hat{V}^*\hat{U}\hat{U}^*=\hat{U}\begin{bmatrix}
            \hat{\Sigma}_{1}& O\\
            O & O
        \end{bmatrix}\begin{bmatrix}
    \hat{K} & \hat{L}\\
    \hat{M} & \hat{N}
\end{bmatrix}\hat{U}^*=\hat{U}\begin{bmatrix}
    \hat{\Sigma}_{1}\hat{K} & \hat{\Sigma}_{1}\hat{L}\\
   O & O
\end{bmatrix}\hat{U}^*.$$
Also, we have $\hat{K}\hat{K}^*+\hat{L}\hat{L}^*=I_r$ and $\hat{K}\hat{M}^*+\hat{L}\hat{N}^*=O$ since $\hat{W}$ is unitary. 
\end{proof}
The next theorem is an application of the above decomposition.
                                                                 \begin{theorem}
Let $\hat{A}\in \mb{DC}^{n\times n}$. Then,
\begin{enumerate}[(i)]
\item $\hat{A}^N=\hat{U} \begin{bmatrix}
   \hat{K}^* \hat{\Sigma}_{1}^{-1} & O\\
   \hat{L}^*\hat{\Sigma}_{1}^{-1}  & O
\end{bmatrix}\hat{U}^*;$
\item $A_e^{\#}=\hat{U} \begin{bmatrix}
    \hat{K}^{-1} \hat{\Sigma}_{1}^{-1}&  \hat{K}^{-1} \hat{\Sigma}_{1}^{-1} \hat{K}^{-1}\hat{L}\\
    O & O
\end{bmatrix}\hat{U}^*,$
\end{enumerate}
 where $\hat{U},~\hat{\Sigma}_{1}$ and $\hat{\Sigma}_{2}$ are defined as in Theorem \ref{thm:DHS dec}.
\end{theorem}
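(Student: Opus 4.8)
The plan is to obtain both formulas from the decomposition in Theorem~\ref{thm:DHS dec} combined with the SVD-based expressions already recorded, so that only block-matrix bookkeeping remains.

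For part (i), I would begin from Theorem~\ref{thm:ndmpi dc}, which gives $\hat{A}^N=\hat{V}\left[\begin{smallmatrix}\hat{\Sigma}_{1}^{-1} & O\\ O & O\end{smallmatrix}\right]\hat{U}^*$ in terms of the SVD factors. In the proof of Theorem~\ref{thm:DHS dec} one sets $\hat{W}=\hat{V}^*\hat{U}=\left[\begin{smallmatrix}\hat{K} & \hat{L}\\ \hat{M} & \hat{N}\end{smallmatrix}\right]$; since $\hat{U}$ and $\hat{V}$ are dual complex unitary this yields $\hat{V}=\hat{U}\hat{W}^*$, and, working componentwise on standard and dual parts, $\hat{W}^*=\left[\begin{smallmatrix}\hat{K}^* & \hat{M}^*\\ \hat{L}^* & \hat{N}^*\end{smallmatrix}\right]$. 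Substituting $\hat{V}=\hat{U}\hat{W}^*$ into the expression for $\hat{A}^N$ and carrying out the block product $\hat{W}^*\left[\begin{smallmatrix}\hat{\Sigma}_{1}^{-1} & O\\ O & O\end{smallmatrix}\right]$ (note $\hat{\Sigma}_{1}$ is invertible, being diagonal with positive appreciable entries) immediately produces the claimed form. As an alternative, one could instead verify directly that the proposed matrix satisfies the four defining relations of Definition~\ref{def:ndmpi}, using $\hat{K}\hat{K}^*+\hat{L}\hat{L}^*=I_r$ and $\hat{K}\hat{M}^*+\hat{L}\hat{N}^*=O$ to reduce $\hat{A}\hat{A}^N\hat{A}$ to $A_e$ and to obtain the two Hermitian conditions; the SVD substitution is the shorter route.

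For part (ii), Theorem~\ref{thm:DHS dec} gives $A_e=\hat{U}B\hat{U}^*$ with $B=\left[\begin{smallmatrix}\hat{\Sigma}_{1}\hat{K} & \hat{\Sigma}_{1}\hat{L}\\ O & O\end{smallmatrix}\right]$. Since conjugation by the unitary $\hat{U}$ preserves the three group-inverse equations, it suffices to check that $X=\left[\begin{smallmatrix}\hat{K}^{-1}\hat{\Sigma}_{1}^{-1} & \hat{K}^{-1}\hat{\Sigma}_{1}^{-1}\hat{K}^{-1}\hat{L}\\ O & O\end{smallmatrix}\right]$ is the group inverse of $B$, i.e.\ that $BXB=B$, $XBX=X$, and $BX=XB$. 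One first computes $BX=XB=\left[\begin{smallmatrix}I_r & \hat{K}^{-1}\hat{L}\\ O & O\end{smallmatrix}\right]$, after which each of the three identities follows from a single block multiplication. This step presupposes that $\hat{K}$ is invertible (so that $\hat{K}^{-1}$ exists in $\mb{DC}$); as in the classical Hartwig--Spindelb\"ock setting one expects that $A_e^{\#}$ exists precisely when $\hat{K}$ is invertible, so strictly the statement of (ii) should either carry this hypothesis or be preceded by that equivalence.

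All of the computations here are routine; the only points that need genuine care are the transcription of the conjugate transpose of the $2\times2$ block matrix $\hat{W}$ and the identity $\hat{V}=\hat{U}\hat{W}^*$ in part (i), and the invertibility hypothesis on $\hat{K}$ underlying part (ii) together with checking that $\hat{K}^{-1}$ and $\hat{\Sigma}_{1}^{-1}$ are well defined in $\mb{DC}$. I do not anticipate any substantive obstacle beyond this bookkeeping.
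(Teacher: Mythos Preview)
Your proposal is correct. For part~(i) your primary route differs from the paper's: the paper sets $\hat{X}$ equal to the claimed expression and verifies the four NDMPI equations of Definition~\ref{def:ndmpi} directly (computing $\hat{A}\hat{X}\hat{A}$ and invoking $\hat{K}\hat{K}^*+\hat{L}\hat{L}^*=I_r$, $\hat{M}\hat{K}^*+\hat{N}\hat{L}^*=O$, then stating that the remaining three equations follow similarly), whereas you instead pull back the already-proved SVD formula of Theorem~\ref{thm:ndmpi dc} via the substitution $\hat{V}=\hat{U}\hat{W}^*$. Your approach is shorter and exploits existing machinery, while the paper's is more self-contained and does not need the uniqueness of the NDMPI implicit in Theorem~\ref{thm:ndmpi dc}; you do list the paper's method as your alternative. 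For part~(ii) the paper gives no explicit argument at all, and your direct verification of the three group-inverse identities via $BX=XB=\left[\begin{smallmatrix}I_r & \hat{K}^{-1}\hat{L}\\ O & O\end{smallmatrix}\right]$ is exactly what is needed. Your observation that invertibility of $\hat{K}$ is an unstated hypothesis is well taken; the paper is silent on this point.
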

\begin{proof}
From the above theorem, we have 
$$\hat{A} =\hat{U} \begin{bmatrix}
    \hat{\Sigma}_{1}\hat{K} & \hat{\Sigma}_{1}\hat{L}\\
    \hat{\Sigma}_{2}\hat{M} & \hat{\Sigma}_{2}\hat{N}
\end{bmatrix}\hat{U}^*~\text{and}~A_e =\hat{U} \begin{bmatrix}
    \hat{\Sigma}_{1}\hat{K} & \hat{\Sigma}_{1}\hat{L}\\
    O & O
\end{bmatrix}\hat{U}^*.$$ 
\begin{enumerate}[(i)]
\item Suppose that $\hat{X}=\hat{U} \begin{bmatrix}
   \hat{K}^* \hat{\Sigma}_{1}^{-1} & O\\
   \hat{L}^*\hat{\Sigma}_{1}^{-1}  & O
\end{bmatrix}\hat{U}^*.$ Then,
\begin{align*}
\hat{A}\hat{X}\hat{A}&=\hat{U}\begin{bmatrix}
    \hat{\Sigma}_{1}\hat{K} & \hat{\Sigma}_{1}\hat{L}\\
   \hat{\Sigma}_{2} \hat{M} & \hat{\Sigma}_{2}\hat{N}
\end{bmatrix}\begin{bmatrix}
   \hat{K}^* \hat{\Sigma}_{1}^{-1} & O\\
   \hat{L}^*\hat{\Sigma}_{1}^{-1}  & O
\end{bmatrix}\begin{bmatrix}
    \hat{\Sigma}_{1}\hat{K} & \hat{\Sigma}_{1}\hat{L}\\
   \hat{\Sigma}_{2} \hat{M} & \hat{\Sigma}_{2}\hat{N}
\end{bmatrix}\hat{U}^*\\
    &= \hat{U}\begin{bmatrix}
\hat{\Sigma}_{1}(\hat{K}\hat{K}^*+\hat{L}\hat{L}^*)\hat{\Sigma}_{1}^{-1}  & O\\
  \hat{\Sigma}_{2}(\hat{M}\hat{K}^*+\hat{N}\hat{L}^*)\hat{\Sigma}_{1}^{-1}   & O
\end{bmatrix} \begin{bmatrix}
    \hat{\Sigma}_{1}\hat{K} & \hat{\Sigma}_{1}\hat{L}\\
   \hat{\Sigma}_{2} \hat{M} & \hat{\Sigma}_{2}\hat{N}
    \end{bmatrix} \hat{U}^*\\
    &=\hat{U}\begin{bmatrix}
I_r & O\\
  O & O
\end{bmatrix} \begin{bmatrix}
    \hat{\Sigma}_{1}\hat{K} & \hat{\Sigma}_{1}\hat{L}\\
   \hat{\Sigma}_{2} \hat{M} & \hat{\Sigma}_{2}\hat{N}
\end{bmatrix} \hat{U}^*~~(\text{since}~\hat{K}\hat{K}^*+\hat{L}\hat{L}^*=I_r~\text{and}~\hat{M}\hat{K}^*+\hat{N}\hat{L}^*=O)\\
    &= \hat{U}\begin{bmatrix}
    \hat{\Sigma}_{1}\hat{K} & \hat{\Sigma}_{1}\hat{L}\\
   O & O
\end{bmatrix} \hat{U}^*\\
    &=A_e. 
\end{align*}
Similarly, we can prove the other three conditions of the NDMPI. Thus, we get $X=A^N.$ 
\end{enumerate}
\end{proof}
Using the above theorem, we prove the following characterizations.
                                 \begin{theorem}\label{thm:eqv cond 1}
Let $\hat{A}\in \mb{DC}^{n\times n}$. Then, 
\begin{enumerate}[(i)]
\item $\hat{A}$ is Hermitian if and only if $\hat{K}^*\hat{\Sigma}_1=\hat{\Sigma}_1\hat{K},$  $N_s^*\Sigma_{2d}=\Sigma_{2d}N_s$ and $\hat{L}=(\Sigma_{1s}^{-1}M_s^*\Sigma_{2d})\epsilon.$  
\item $\hat{A}$ is new dual EP if and only if $\hat{L}=O.$
\item $\hat{A}$ is normal if and only if $\hat{L}=O$ and $\hat{\Sigma}_1\hat{K}=\hat{K}\hat{\Sigma}_1$.
\item $\hat{A}^*=\hat{A}^N$ if and only if $\Sigma_{2d}M_s=\Sigma_{2d}N_s=0$ and $\hat{\Sigma}_1=I.$
\item $\hat{A}^N$ is idempotent if and only if $\hat{\Sigma}_1=\hat{K}.$
\item $\hat{A}^N\hat{A}^*=\hat{A}^*\hat{A}^N$ if and only if  $\Sigma_{2d}M_s=O,$ $L_s\Sigma_{2d}N_s=O,$ and $\hat{\Sigma}_1^2\hat{K}=\hat{K}\hat{\Sigma}_1^2$.
\end{enumerate}
\end{theorem}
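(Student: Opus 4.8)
The plan is to prove all six equivalences by one uniform scheme. Represent $\hat{A}$, $\hat{A}^*$, $\hat{A}^N$ and $(\hat{A}^N)^2$ in the $\hat{U}$-conjugated block form coming from Theorem~\ref{thm:DHS dec} and the preceding theorem (I will write ``$\hat{U}$-conjugate of $X$'' for $\hat{U}X\hat{U}^*$); since the singular values are dual-real, $\hat{\Sigma}_1^*=\hat{\Sigma}_1$ and $\hat{\Sigma}_2^*=\hat{\Sigma}_2$, so
\[
\hat{A}^*=\hat{U}\begin{bmatrix}\hat{K}^*\hat{\Sigma}_1 & \hat{M}^*\hat{\Sigma}_2\\ \hat{L}^*\hat{\Sigma}_1 & \hat{N}^*\hat{\Sigma}_2\end{bmatrix}\hat{U}^*.
\]
Each identity in the statement then becomes, after cancelling the outer $\hat{U},\hat{U}^*$, a $2\times 2$ system of block equations, which I would simplify using three standing facts: (a) $\hat{\Sigma}_2=\Sigma_{2d}\epsilon$ is infinitesimal, so $\hat{\Sigma}_2^2=O$ and $\hat{X}\hat{\Sigma}_2=X_s\Sigma_{2d}\epsilon$ for every dual $\hat{X}=X_s+X_d\epsilon$; (b) unitarity of $\hat{W}=\hat{V}^*\hat{U}$ gives $\hat{K}\hat{K}^*+\hat{L}\hat{L}^*=I_r$, $\hat{K}\hat{M}^*+\hat{L}\hat{N}^*=O$ and $\hat{M}\hat{M}^*+\hat{N}\hat{N}^*=I_{n-r}$, so $\begin{bmatrix}\hat{K} & \hat{L}\end{bmatrix}$ is a left inverse of $\begin{bmatrix}\hat{K}^*\\ \hat{L}^*\end{bmatrix}$ (and, on standard parts, $\begin{bmatrix}K_s & L_s\end{bmatrix}$ is a left inverse of $\begin{bmatrix}K_s^*\\ L_s^*\end{bmatrix}$), which lets one cancel such a stacked left factor; (c) for a positive appreciable dual diagonal $\hat{\Sigma}_1$ and any conformable dual $\hat{X}$, $\hat{\Sigma}_1^2\hat{X}=\hat{X}\hat{\Sigma}_1^2\iff\hat{\Sigma}_1\hat{X}=\hat{X}\hat{\Sigma}_1$, which I would prove entrywise in the basis of $\Sigma_{1s}$ (a nonzero off-diagonal entry of $X_s$ forces the two matching diagonal entries of $\Sigma_{1s}$ to coincide, after which the dual-part conditions from the two equations agree), or via uniqueness of the positive appreciable square root. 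I would also use the routine dual facts that $\hat{X}$ square with $\hat{X}\hat{X}^*=I$ has $\hat{X}^*\hat{X}=I$, and that $\hat{X}^*\hat{\Sigma}_1^2\hat{X}=O$ forces $X_s=O$.

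For (i)--(v) the scheme runs quickly. In (i), $\hat{A}=\hat{A}^*$ gives four block equations; the $(1,2)$ one, $\hat{\Sigma}_1\hat{L}=\hat{M}^*\hat{\Sigma}_2=M_s^*\Sigma_{2d}\epsilon$, has vanishing standard part, so $L_s=O$ and then $\hat{L}=(\Sigma_{1s}^{-1}M_s^*\Sigma_{2d})\epsilon$; with this the $(2,1)$ equation is automatic, while $(1,1)$ and $(2,2)$ read $\hat{K}^*\hat{\Sigma}_1=\hat{\Sigma}_1\hat{K}$ and $N_s^*\Sigma_{2d}=\Sigma_{2d}N_s$. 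In (ii), the constraints give $\hat{A}\hat{A}^N=\hat{U}\begin{bmatrix}I_r & O\\ O & O\end{bmatrix}\hat{U}^*$ unconditionally, while $\hat{A}^N\hat{A}=\hat{U}\begin{bmatrix}\hat{K}^*\hat{K}&\hat{K}^*\hat{L}\\ \hat{L}^*\hat{K}&\hat{L}^*\hat{L}\end{bmatrix}\hat{U}^*$; equality forces $\hat{L}^*\hat{L}=O$ (so $L_s=O$), $\hat{K}^*\hat{K}=I_r$ (so $K_s$ unitary) and $\hat{K}^*\hat{L}=O$, whence $L_d=O$, i.e.\ $\hat{L}=O$; conversely $\hat{L}=O$ together with $\hat{K}\hat{K}^*=I_r$ (hence $\hat{K}^*\hat{K}=I_r$) recovers the identity. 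In (iii), $\hat{A}\hat{A}^*=\hat{U}\begin{bmatrix}\hat{\Sigma}_1^2 & O\\ O & \hat{\Sigma}_2^2\end{bmatrix}\hat{U}^*=\hat{U}\begin{bmatrix}\hat{\Sigma}_1^2 & O\\ O & O\end{bmatrix}\hat{U}^*$, so matching with $\hat{A}^*\hat{A}$ (computed blockwise, with $\hat{\Sigma}_2^2=O$) gives $\hat{L}^*\hat{\Sigma}_1^2\hat{L}=O$ (so $L_s=O$), then $\hat{K}^*\hat{\Sigma}_1^2\hat{L}=O$ (so $L_d=O$, since $K_s$ is now unitary), i.e.\ $\hat{L}=O$, and the $(1,1)$ equation $\hat{K}^*\hat{\Sigma}_1^2\hat{K}=\hat{\Sigma}_1^2$ is, as $\hat{K}$ is unitary, $\hat{\Sigma}_1^2\hat{K}=\hat{K}\hat{\Sigma}_1^2$, i.e.\ $\hat{\Sigma}_1\hat{K}=\hat{K}\hat{\Sigma}_1$ by (c). In (iv), the right column of $\hat{A}^*=\hat{A}^N$ gives $\hat{M}^*\hat{\Sigma}_2=\hat{N}^*\hat{\Sigma}_2=O$, i.e.\ $\Sigma_{2d}M_s=\Sigma_{2d}N_s=O$, and the left column is $\begin{bmatrix}\hat{K}^*\\ \hat{L}^*\end{bmatrix}(\hat{\Sigma}_1-\hat{\Sigma}_1^{-1})=O$, hence $\hat{\Sigma}_1^2=I_r$, i.e.\ $\hat{\Sigma}_1=I_r$. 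In (v), the left column of $(\hat{A}^N)^2=\hat{A}^N$ is $\begin{bmatrix}\hat{K}^*\\ \hat{L}^*\end{bmatrix}\hat{\Sigma}_1^{-1}(\hat{K}^*\hat{\Sigma}_1^{-1}-I_r)=O$; cancelling and multiplying by $\hat{\Sigma}_1$ gives $\hat{K}^*\hat{\Sigma}_1^{-1}=I_r$, i.e.\ $\hat{K}=\hat{\Sigma}_1$, and the converse is immediate.

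Part~(vi) is the crux. A block computation gives
\[
\hat{A}^N\hat{A}^*=\hat{U}\begin{bmatrix}\hat{K}^*\hat{\Sigma}_1^{-1}\hat{K}^*\hat{\Sigma}_1 & \hat{K}^*\hat{\Sigma}_1^{-1}\hat{M}^*\hat{\Sigma}_2\\ \hat{L}^*\hat{\Sigma}_1^{-1}\hat{K}^*\hat{\Sigma}_1 & \hat{L}^*\hat{\Sigma}_1^{-1}\hat{M}^*\hat{\Sigma}_2\end{bmatrix}\hat{U}^*,
\]
while $\hat{A}^*\hat{A}^N$ is the $\hat{U}$-conjugate of the matrix whose second column is $O$ and whose first column is $\bigl(\hat{K}^*\hat{\Sigma}_1\hat{K}^*\hat{\Sigma}_1^{-1}+\hat{M}^*\hat{\Sigma}_2\hat{L}^*\hat{\Sigma}_1^{-1},\ \hat{L}^*\hat{\Sigma}_1\hat{K}^*\hat{\Sigma}_1^{-1}+\hat{N}^*\hat{\Sigma}_2\hat{L}^*\hat{\Sigma}_1^{-1}\bigr)^{\top}$. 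The $(1,2)$, $(2,2)$ entries give $\begin{bmatrix}\hat{K}^*\\ \hat{L}^*\end{bmatrix}\hat{\Sigma}_1^{-1}\hat{M}^*\hat{\Sigma}_2=O$; cancelling and multiplying by $\hat{\Sigma}_1$ yields $\hat{M}^*\hat{\Sigma}_2=O$, i.e.\ $\Sigma_{2d}M_s=O$. Putting this back, the $(1,1)$, $(2,1)$ entries become $\hat{K}^*\hat{R}=O$ and $\hat{L}^*\hat{R}=\hat{N}^*\hat{\Sigma}_2\hat{L}^*\hat{\Sigma}_1^{-1}$, where $\hat{R}:=\hat{\Sigma}_1^{-1}\hat{K}^*\hat{\Sigma}_1-\hat{\Sigma}_1\hat{K}^*\hat{\Sigma}_1^{-1}$. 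Taking standard parts and cancelling $\begin{bmatrix}K_s^*\\ L_s^*\end{bmatrix}$ shows the standard part of $\hat{R}$ vanishes, so $\hat{R}=R_d\epsilon$ and the equations read $K_s^*R_d=O$, $L_s^*R_d=N_s^*\Sigma_{2d}L_s^*\Sigma_{1s}^{-1}$; left-multiplying the stacked pair by $\begin{bmatrix}K_s & L_s\end{bmatrix}$ gives $R_d=L_sN_s^*\Sigma_{2d}L_s^*\Sigma_{1s}^{-1}$, and now $L_sN_s^*=-K_sM_s^*$ (standard part of $\hat{K}\hat{M}^*+\hat{L}\hat{N}^*=O$) together with $M_s^*\Sigma_{2d}=O$ forces $R_d=O$. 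Hence $\hat{R}=O$, equivalently $\hat{\Sigma}_1^2\hat{K}=\hat{K}\hat{\Sigma}_1^2$, and feeding $R_d=O$ back into $L_s^*R_d=N_s^*\Sigma_{2d}L_s^*\Sigma_{1s}^{-1}$ gives $N_s^*\Sigma_{2d}L_s^*=O$, i.e.\ $L_s\Sigma_{2d}N_s=O$. For the converse one assumes the three conditions, notes that $\hat{\Sigma}_1^2\hat{K}=\hat{K}\hat{\Sigma}_1^2$ is exactly $\hat{R}=O$, and reads off from the two block forms that all entries agree: the right column because $\hat{M}^*\hat{\Sigma}_2=O$; the $(1,1)$ entries because their difference is $\hat{K}^*\hat{R}=O$; and the $(2,1)$ entries because their difference is $\hat{L}^*\hat{R}-\hat{N}^*\hat{\Sigma}_2\hat{L}^*\hat{\Sigma}_1^{-1}$ with $\hat{N}^*\hat{\Sigma}_2\hat{L}^*\hat{\Sigma}_1^{-1}=N_s^*\Sigma_{2d}L_s^*\Sigma_{1s}^{-1}\epsilon=O$.

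I expect the only real difficulties to lie in part~(vi) and in fact~(c). In (vi) the block equations do not collapse under a single cancellation; one must isolate the auxiliary matrix $\hat{R}$, see that its standard part is forced to vanish, and then invoke the so-far unused unitarity relation $\hat{K}\hat{M}^*+\hat{L}\hat{N}^*=O$ to annihilate the residual dual part $R_d$, while tracking carefully which products are infinitesimal. Fact~(c)---that squared commutation with the positive appreciable diagonal $\hat{\Sigma}_1$ descends to ordinary commutation---together with the two routine dual facts noted above, is the one place where the dual rather than purely complex structure needs a genuine, if short, argument; everything else is bookkeeping with block products.
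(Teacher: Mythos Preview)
Your proposal is correct and follows essentially the same approach as the paper: write $\hat{A}$, $\hat{A}^*$, $\hat{A}^N$ in the $\hat{U}$-conjugated block form of Theorem~\ref{thm:DHS dec}, reduce each identity to a $2\times 2$ block system, and simplify using the unitarity relations for $\hat{W}$ together with the fact that $\hat{\Sigma}_2$ is infinitesimal. Your ``left-cancellation of $\begin{bmatrix}\hat{K}^*\\ \hat{L}^*\end{bmatrix}$'' is exactly the paper's repeated trick of pre-multiplying one block equation by $\hat{K}$, the companion equation by $\hat{L}$, and adding.

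The one place where your route is longer than necessary is part~(vi). You first pass to standard parts to force $R_s=O$, then work on $R_d$ separately, invoking the standard part of $\hat{K}\hat{M}^*+\hat{L}\hat{N}^*=O$. The paper instead applies the cancellation trick at the full dual level: pre-multiplying \eqref{eq:19} by $\hat{K}$, \eqref{eq:21} by $\hat{L}$ and adding uses \emph{both} row-unitarity relations simultaneously (the $\hat{M}^*\hat{\Sigma}_2$ and $\hat{N}^*\hat{\Sigma}_2$ terms combine into $(\hat{K}\hat{M}^*+\hat{L}\hat{N}^*)\hat{\Sigma}_2\hat{L}^*\hat{\Sigma}_1^{-1}=O$), yielding $\hat{R}=O$ in a single stroke without any standard/dual split. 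The remaining condition $L_s\Sigma_{2d}N_s=O$ then drops out of \eqref{eq:21} after post-multiplying by $\hat{\Sigma}_1$ and using $\hat{K}^*\hat{\Sigma}_1^2=\hat{\Sigma}_1^2\hat{K}^*$. Your argument is valid, just more laborious than needed; on the other hand, you state the converse in~(vi) explicitly, which the paper leaves implicit.
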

                                     \begin{proof}
From Theorem \ref{thm:DHS dec}, we have 
$$\hat{A} =\hat{U} \begin{bmatrix}
    \hat{\Sigma}_{1}\hat{K} & \hat{\Sigma}_{1}\hat{L}\\
    \hat{\Sigma}_{2}\hat{M} & \hat{\Sigma}_{2}\hat{N}
 \end{bmatrix}\hat{U}^*~\text{and}~\hat{A}^N=\hat{U} \begin{bmatrix}
   \hat{K}^* \hat{\Sigma}_{1}^{-1} & O\\
   \hat{L}^*\hat{\Sigma}_{1}^{-1}  & O
 \end{bmatrix}\hat{U}^*.$$
\begin{enumerate}[(i)]
\item We have $\hat{A}^*=\hat{U}\begin{bmatrix}
    (\hat{\Sigma}_{1}\hat{K})^* & (\hat{\Sigma}_{2}\hat{M})^*\\
    (\hat{\Sigma}_{1}\hat{L})^* & (\hat{\Sigma}_{2}\hat{N})^*
 \end{bmatrix}\hat{U}^*.$ So, $\hat{A}$ is Hermitian if and only if 
 \begin{eqnarray}
      (\hat{\Sigma}_{1}\hat{K})^* &=& \hat{\Sigma}_{1}\hat{K}\label{eq:1}\\
     (\hat{\Sigma}_{2}\hat{M})^*&=&  \hat{\Sigma}_{1}\hat{L}\label{eq:2}\\
    (\hat{\Sigma}_{1}\hat{L})^*&=& \hat{\Sigma}_{2}\hat{M}\label{eq:3}\\
    (\hat{\Sigma}_{2}\hat{N})^*&=&\hat{\Sigma}_{2}\hat{N}\label{eq:4}.
    \end{eqnarray}
From \eqref{eq:1} and \eqref{eq:4}, we have 
$\hat{K}^*\hat{\Sigma}_1=\hat{\Sigma}_1\hat{K}~\text{and}~N_s^*\Sigma_{2d}=\Sigma_{2d}N_s,$
respectively.
From equations \eqref{eq:2} and \eqref{eq:3}, we have 
$$ (\hat{\Sigma}_{1}\hat{L})(\hat{\Sigma}_{1}\hat{L})^*=(\hat{\Sigma}_{2}\hat{M})^*(\hat{\Sigma}_{2}\hat{M}),$$
which shows that $$\hat{L}\hat{L}^*=O$$ since $\hat{\Sigma}_{1}$ is invertible and all the entries of $\hat{\Sigma}_{2}$ are infinitesimal numbers. From the last equation, we have that 
$$L_s=O.$$
Now, using this in \eqref{eq:2}, we get $$L_d=\Sigma_{1s}^{-1}M_s^*\Sigma_{2d}.$$
Hence, we get the desired result.
\item We have
$$\hat{A}\hat{A}^N=\hat{U}\begin{bmatrix}
    \hat{\Sigma}_{1}\hat{K} & \hat{\Sigma}_{1}\hat{L}\\
    \hat{\Sigma}_{2}\hat{M} & \hat{\Sigma}_{2}\hat{N}
\end{bmatrix}\begin{bmatrix}
   \hat{K}^* \hat{\Sigma}_{1}^{-1} & O\\
   \hat{L}^*\hat{\Sigma}_{1}^{-1}  & O
\end{bmatrix}\hat{U}^*=\hat{U}\begin{bmatrix}
  I_r & O\\
   O  & O
\end{bmatrix}\hat{U}^*$$ and 
 $$\hat{A}^N\hat{A}=\hat{U}\begin{bmatrix}
   \hat{K}^* \hat{\Sigma}_{1}^{-1} & O\\
   \hat{L}^*\hat{\Sigma}_{1}^{-1}  & O
\end{bmatrix}\begin{bmatrix}
    \hat{\Sigma}_{1}\hat{K} & \hat{\Sigma}_{1}\hat{L}\\
    \hat{\Sigma}_{2}\hat{M} & \hat{\Sigma}_{2}\hat{N}
\end{bmatrix}\hat{U}^*=\hat{U}\begin{bmatrix}
   \hat{K}^* \hat{K} & \hat{K}^*\hat{L}\\
   \hat{L}^*\hat{K}  & \hat{L}^* \hat{L}
\end{bmatrix}\hat{U}^*.$$
$\hat{A}$ is new dual EP, i.e., $\hat{A}\hat{A}^N=\hat{A}^N\hat{A}$ if and only if 
\begin{eqnarray}
      \hat{K}^* \hat{K} &=& I_r\label{eq:5}\\
     \hat{K}^*\hat{L}&=&  O\label{eq:6}\\
   \hat{L}^*\hat{K}&=& O\label{eq:7}\\
   \hat{L}^* \hat{L}&=&O\label{eq:8}.
\end{eqnarray}
From \eqref{eq:5}, we have 
    $K_s^*K_s=I_r$ and $K_s^*K_d+K_d^*K_s=0.$ The condition 
$K_s^*K_s=I_r$ implies that $K_s$ is unitary. On pre and post-multiplying $K_s^*K_d+K_d^*K_s=0$ by $K_s$ and $K_s^*$, respectively, we get $K_dK_s^*+K_sK_d^*=0.$ Therefore, we have $\hat{K}$ is unitary. Thus, we have $\hat{L}=O$ from \eqref{eq:6}. Hence, the assertion follows.
\item Using the conditions $\hat{K}\hat{K}^*+\hat{L}\hat{L}^*=I_r$ and $\hat{K}\hat{M}^*+\hat{L}\hat{N}^*=O,$  we have $$\hat{A}\hat{A}^*=\hat{U}\begin{bmatrix}
    \hat{\Sigma}_{1}\hat{K} & \hat{\Sigma}_{1}\hat{L}\\
    \hat{\Sigma}_{2}\hat{M} & \hat{\Sigma}_{2}\hat{N}
\end{bmatrix}\begin{bmatrix}
    \hat{K}^*\hat{\Sigma}_{1} & \hat{M}^*\hat{\Sigma}_{2}\\
    \hat{L}^*\hat{\Sigma}_{1} & \hat{N}^*\hat{\Sigma}_{2}\\
\end{bmatrix}\hat{U}^*=\hat{U}\begin{bmatrix}
     \hat{\Sigma}_{1}^2 & O\\
     O & O 
\end{bmatrix}\hat{U}^*$$ and 
 $$\hat{A}^*\hat{A}=\hat{U}\begin{bmatrix}
    \hat{K}^*\hat{\Sigma}_{1} & \hat{M}^*\hat{\Sigma}_{2}\\
    \hat{L}^*\hat{\Sigma}_{1} & \hat{N}^*\hat{\Sigma}_{2}\\
\end{bmatrix}\begin{bmatrix}
    \hat{\Sigma}_{1}\hat{K} & \hat{\Sigma}_{1}\hat{L}\\
    \hat{\Sigma}_{2}\hat{M} & \hat{\Sigma}_{2}\hat{N}
\end{bmatrix}\hat{U}^*=\hat{U}\begin{bmatrix}
    \hat{K}^*\hat{\Sigma}_{1}^2\hat{K} & \hat{K}^*\hat{\Sigma}_{1}^2\hat{L}\\
    \hat{L}^*\hat{\Sigma}_{1}^2\hat{K} & \hat{L}^*\hat{\Sigma}_{1}^2\hat{L}\\
\end{bmatrix}\hat{U}^*.$$
So, $\hat{A}$ is normal if and only if 
\begin{eqnarray}
      \hat{K}^* \hat{\Sigma}_{1}^2\hat{K} &=& \hat{\Sigma}_{1}^2\label{eq:9}\\
     \hat{K}^*\hat{\Sigma}_{1}^2\hat{L}&=&  O\label{eq:10}\\
   \hat{L}^*\hat{\Sigma}_{1}^2\hat{K}&=& O\label{eq:11}\\
   \hat{L}^* \hat{\Sigma}_{1}^2\hat{L}&=&O\label{eq:12}.
\end{eqnarray}
Pre-multiplying \eqref{eq:10} by $\hat{K}$ and \eqref{eq:12} by $\hat{L}$, and then adding them, we get 
$$(\hat{K}\hat{K}^*+\hat{L}\hat{L}^*)\hat{\Sigma}_{1}^2\hat{L}=O,$$
which implies that $\hat{L}=O.$ So, we get $\hat{K}$ is unitary. By taking the square root of both sides, we get 
$ \hat{K}^* \hat{\Sigma}_{1}\hat{K} = \hat{\Sigma}_{1}.$ Pre-multiplying the last equation by $\hat{K}$, we get 
$\hat{\Sigma}_{1}\hat{K}=\hat{K}\hat{\Sigma}_{1}.$
\item We have that $\hat{A}^*=\hat{A}^N$ if and only if  
\begin{eqnarray}
      \hat{K}^* \hat{\Sigma}_{1} &=& \hat{K}^*\hat{\Sigma}_{1}^{-1}\label{eq:13}\\
(\hat{\Sigma}_{2}\hat{M})^*&=&  O\label{eq:14}\\
  (\hat{\Sigma}_{2}\hat{N})^*&=&  O\label{eq:15}\\
   \hat{L}^* \hat{\Sigma}_{1}&=&\hat{L}^* \hat{\Sigma}_{1}^{-1}\label{eq:16}.
\end{eqnarray}
Pre-multiplying \eqref{eq:13} by $\hat{K}$ and \eqref{eq:16} by $\hat{L}$, and then adding them, we get \\
$$(\hat{K}\hat{K}^*+\hat{L}\hat{L}^*)\hat{\Sigma}_{1}=(\hat{K}\hat{K}^*+\hat{L}\hat{L}^*)\hat{\Sigma}_{1}^{-1},$$
which implies that
$\hat{\Sigma}_{1}^2=I,$ which shows that 
$$\hat{\mu}_i^2=1~\text{for each}~i=1,2,\dots,r.$$ From which we get 
$$\hat{\mu}_i=1~\text{for each}~i=1,2,\dots,r$$
since each $\hat{\mu}_i$ is positive appreciable dual number.
Thus, $\hat{\Sigma}_{1}=I.$ Now, From \eqref{eq:14} and \eqref{eq:15}, we have $\Sigma_{2d}M_s=\Sigma_{2d}N_s=0.$ 
\item We have 
$$(\hat{A}^{N})^2=\begin{bmatrix}
   \hat{K}^*\hat{\Sigma}_{1}^{-1}\hat{K}^*\hat{\Sigma}_{1}^{-1} & O\\
   \hat{L}^*\hat{\Sigma}_{1}^{-1}\hat{K}^*\hat{\Sigma}_{1}^{-1} & O
\end{bmatrix}.$$
So, $\hat{A}^N$ is idempotent, i.e., $(\hat{A}^{N})^2=\hat{A}^N$ if and only if
\begin{eqnarray}
    \hat{K}^*\hat{\Sigma}_{1}^{-1}\hat{K}^*\hat{\Sigma}_{1}^{-1} &=& \hat{K}^*\hat{\Sigma}_{1}^{-1}\label{eq:17}\\
 \hat{L}^*\hat{\Sigma}_{1}^{-1}\hat{K}^*\hat{\Sigma}_{1}^{-1} &=& \hat{L}^*\hat{\Sigma}_{1}^{-1}.\label{eq:18}
\end{eqnarray}
Pre-multiplying \eqref{eq:17} by $\hat{K}$ and \eqref{eq:18} by $\hat{L}$, and then adding them, we obtain \\  
 $$\hat{\Sigma}_{1}=\hat{K}.$$
\item We have 
 $$\hat{A}^N\hat{A}^*=\hat{U}\begin{bmatrix}
   \hat{K}^* \hat{\Sigma}_{1}^{-1} & O\\
   \hat{L}^*\hat{\Sigma}_{1}^{-1}  & O
\end{bmatrix}\begin{bmatrix}
    (\hat{\Sigma}_{1}\hat{K})^* & (\hat{\Sigma}_{2}\hat{M})^*\\
    (\hat{\Sigma}_{1}\hat{L})^* & (\hat{\Sigma}_{2}\hat{N})^*
\end{bmatrix}\hat{U}^*=\hat{U}\begin{bmatrix}
   \hat{K}^* \hat{\Sigma}_{1}^{-1}\hat{K}^* \hat{\Sigma}_{1} & \hat{K}^* \hat{\Sigma}_{1}^{-1}\hat{M}^* \hat{\Sigma}_{2}\\
   \hat{L}^*\hat{\Sigma}_{1}^{-1} \hat{K}^* \hat{\Sigma}_{1}  & \hat{L}^* \hat{\Sigma}_{1}^{-1}\hat{M}^* \hat{\Sigma}_{2}
\end{bmatrix}\hat{U}^*$$ and 
$$\hat{A}^*\hat{A}^N=\hat{U}\begin{bmatrix}
    (\hat{\Sigma}_{1}\hat{K})^* & (\hat{\Sigma}_{2}\hat{M})^*\\
    (\hat{\Sigma}_{1}\hat{L})^* & (\hat{\Sigma}_{2}\hat{N})^*
\end{bmatrix}\begin{bmatrix}
   \hat{K}^* \hat{\Sigma}_{1}^{-1} & O\\
   \hat{L}^*\hat{\Sigma}_{1}^{-1}  & O
\end{bmatrix}\hat{U}^*=\hat{U}\begin{bmatrix}
   \hat{K}^* \hat{\Sigma}_{1} \hat{K}^* \hat{\Sigma}_{1}^{-1}+ \hat{M}^* \hat{\Sigma}_{2}\hat{L}^*\hat{\Sigma}_{1}^{-1} & O\\
   \hat{L}^*\hat{\Sigma}_{1} \hat{K}^* \hat{\Sigma}_{1}^{-1}+ \hat{N}^* \hat{\Sigma}_{2}\hat{L}^*\hat{\Sigma}_{1}^{-1} & O
\end{bmatrix}\hat{U}^*.$$
 Then, $\hat{A}^N\hat{A}^*=\hat{A}^*\hat{A}^N$ if and only if 
\begin{eqnarray}
    \hat{K}^* \hat{\Sigma}_{1}^{-1}\hat{K}^* \hat{\Sigma}_{1}  &=& \hat{K}^* \hat{\Sigma}_{1} \hat{K}^* \hat{\Sigma}_{1}^{-1} + \hat{M}^* \hat{\Sigma}_{2}\hat{L}^*\hat{\Sigma}_{1}^{-1}\label{eq:19}\\
\hat{K}^* \hat{\Sigma}_{1}^{-1}\hat{M}^* \hat{\Sigma}_{2}&=&  O\label{eq:20}\\
  \hat{L}^*\hat{\Sigma}_{1}^{-1} \hat{K}^* \hat{\Sigma}_{1}&=&  \hat{L}^*\hat{\Sigma}_{1} \hat{K}^* \hat{\Sigma}_{1}^{-1}+ \hat{N}^* \hat{\Sigma}_{2}\hat{L}^*\hat{\Sigma}_{1}^{-1}\label{eq:21}\\
   \hat{L}^* \hat{\Sigma}_{1}^{-1}\hat{M}^* \hat{\Sigma}_{2}&=&O\label{eq:22}.
\end{eqnarray}
Pre-multiplying \eqref{eq:20} by $\hat{K}$ and \eqref{eq:22} by $\hat{L}$, and then adding them, we get 
 $\Sigma_{2d}M_s=O.$ Pre-multiplying \eqref{eq:19} by $\hat{K}$ and \eqref{eq:21} by $\hat{L}$, and then adding them, we obtain
  $\hat{\Sigma}_{1}^2\hat{K}=\hat{K}\hat{\Sigma}_{1}^2.$ 
Post multiplying \eqref{eq:21} by $ \hat{\Sigma}_{1}$, we get
$$\hat{L}^*\hat{\Sigma}_{1}^{-1} \hat{K}^* \hat{\Sigma}_{1}^2= \hat{L}^*\hat{\Sigma}_{1} \hat{K}^* + \hat{N}^* \hat{\Sigma}_{2}\hat{L}^*.$$
Using the condition $ \hat{\Sigma}_{1}^2\hat{K}=\hat{K}\hat{\Sigma}_{1}^2$ in the last equation, we get $L_s\Sigma_{2d}N_s=O.$ Hence, the assertion (vi) follows. This completes the proof.
\end{enumerate}     
\end{proof}
Here, we provide sufficient conditions for a dual complex matrix to be Hermitian.
                                                                  \begin{theorem}
Let $\hat{A}\in \mathbb{DC}^{n \times n}$. 
Then, each one of the following conditions implies that $\hat{A}$ is Hermitian
\begin{enumerate}[(i)]
\item $\hat{A}\hat{A}\hat{A}^N=\hat{A}^*;$
\item $\hat{A}\hat{A}^*\hat{A}^N=\hat{A}.$
\end{enumerate}
\end{theorem}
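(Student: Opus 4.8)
The plan is to run everything through the Hartwig--Spindelb\"ock decomposition of Theorem~\ref{thm:DHS dec} together with the block form of $\hat{A}^N$ obtained just above, and then to check that each hypothesis forces the three conditions characterising Hermitian matrices in Theorem~\ref{thm:eqv cond 1}(i). So I will keep in mind
$$\hat{A}=\hat{U}\begin{bmatrix}\hat{\Sigma}_1\hat{K} & \hat{\Sigma}_1\hat{L}\\ \hat{\Sigma}_2\hat{M} & \hat{\Sigma}_2\hat{N}\end{bmatrix}\hat{U}^*,\qquad \hat{A}^*=\hat{U}\begin{bmatrix}\hat{K}^*\hat{\Sigma}_1 & \hat{M}^*\hat{\Sigma}_2\\ \hat{L}^*\hat{\Sigma}_1 & \hat{N}^*\hat{\Sigma}_2\end{bmatrix}\hat{U}^*,\qquad \hat{A}^N=\hat{U}\begin{bmatrix}\hat{K}^*\hat{\Sigma}_1^{-1} & O\\ \hat{L}^*\hat{\Sigma}_1^{-1} & O\end{bmatrix}\hat{U}^*,$$
the two unitarity constraints $\hat{K}\hat{K}^*+\hat{L}\hat{L}^*=I_r$ and $\hat{M}\hat{K}^*+\hat{N}\hat{L}^*=O$ (the conjugate transpose of $\hat{K}\hat{M}^*+\hat{L}\hat{N}^*=O$), and the two products already computed inside the proof of Theorem~\ref{thm:eqv cond 1}, namely $\hat{A}\hat{A}^N=\hat{U}\,\mathrm{diag}(I_r,O)\,\hat{U}^*$ and $\hat{A}\hat{A}^*=\hat{U}\,\mathrm{diag}(\hat{\Sigma}_1^2,O)\,\hat{U}^*$. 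Since every diagonal entry of $\hat{\Sigma}_2$ is infinitesimal, I will repeatedly use $\hat{\Sigma}_2=\Sigma_{2d}\epsilon$, so that $\hat{\Sigma}_2\hat{X}=\Sigma_{2d}X_s\epsilon$ and $\hat{\Sigma}_2\hat{X}\hat{\Sigma}_2=O$ for any $\hat{X}=X_s+X_d\epsilon$, and that $\hat{\Sigma}_1,\Sigma_{2d}$ are real diagonal, hence self-adjoint.

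For (i), multiplying $\hat{A}$ on the right by $\hat{A}\hat{A}^N=\hat{U}\,\mathrm{diag}(I_r,O)\,\hat{U}^*$ gives $\hat{A}\hat{A}\hat{A}^N=\hat{U}\begin{bmatrix}\hat{\Sigma}_1\hat{K} & O\\ \hat{\Sigma}_2\hat{M} & O\end{bmatrix}\hat{U}^*$. Equating this with $\hat{A}^*$ entry by entry yields $\hat{\Sigma}_1\hat{K}=\hat{K}^*\hat{\Sigma}_1$ from the $(1,1)$ block, $M_s^*\Sigma_{2d}=O$ from the $(1,2)$ block, $N_s^*\Sigma_{2d}=O$ from the $(2,2)$ block, and $\hat{\Sigma}_2\hat{M}=\hat{L}^*\hat{\Sigma}_1$ from the $(2,1)$ block. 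Because $\hat{\Sigma}_2\hat{M}=\Sigma_{2d}M_s\epsilon=(M_s^*\Sigma_{2d})^*\epsilon=O$ and $\hat{\Sigma}_1$ is invertible, the last relation forces $\hat{L}=O$. Now the three conditions of Theorem~\ref{thm:eqv cond 1}(i) hold: $\hat{K}^*\hat{\Sigma}_1=\hat{\Sigma}_1\hat{K}$ directly, $N_s^*\Sigma_{2d}=\Sigma_{2d}N_s$ because both sides vanish, and $\hat{L}=(\Sigma_{1s}^{-1}M_s^*\Sigma_{2d})\epsilon$ because both sides are $O$. Hence $\hat{A}$ is Hermitian.

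For (ii), using $\hat{A}\hat{A}^*=\hat{U}\,\mathrm{diag}(\hat{\Sigma}_1^2,O)\,\hat{U}^*$ I get $\hat{A}\hat{A}^*\hat{A}^N=\hat{U}\begin{bmatrix}\hat{\Sigma}_1^2\hat{K}^*\hat{\Sigma}_1^{-1} & O\\ O & O\end{bmatrix}\hat{U}^*$, and equating with $\hat{A}$ gives $\hat{\Sigma}_1^2\hat{K}^*\hat{\Sigma}_1^{-1}=\hat{\Sigma}_1\hat{K}$, $\hat{\Sigma}_1\hat{L}=O$, $\hat{\Sigma}_2\hat{M}=O$ and $\hat{\Sigma}_2\hat{N}=O$. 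The second identity gives $\hat{L}=O$; the last two give $\Sigma_{2d}M_s=O$ and $\Sigma_{2d}N_s=O$; and the first, after pre-multiplying by $\hat{\Sigma}_1^{-1}$ and post-multiplying by $\hat{\Sigma}_1$, becomes $\hat{\Sigma}_1\hat{K}^*=\hat{K}\hat{\Sigma}_1$. To upgrade this to the required $\hat{K}^*\hat{\Sigma}_1=\hat{\Sigma}_1\hat{K}$, I substitute $\hat{L}=O$ into $\hat{K}\hat{K}^*+\hat{L}\hat{L}^*=I_r$, obtaining $\hat{K}\hat{K}^*=I_r$, and then, by the standard-part/infinitesimal-part argument used in the proof of Theorem~\ref{thm:eqv cond 1}(ii), conclude that $\hat{K}$ is unitary, i.e. $\hat{K}^{-1}=\hat{K}^*$. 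Then $\hat{\Sigma}_1\hat{K}^*=\hat{K}\hat{\Sigma}_1$ reads $\hat{\Sigma}_1=\hat{K}\hat{\Sigma}_1\hat{K}$, and pre-multiplying by $\hat{K}^*$ gives $\hat{K}^*\hat{\Sigma}_1=\hat{\Sigma}_1\hat{K}$. Finally, exactly as in (i), the remaining two conditions of Theorem~\ref{thm:eqv cond 1}(i) hold since $N_s^*\Sigma_{2d}=\Sigma_{2d}N_s=O$ and $\hat{L}=O=(\Sigma_{1s}^{-1}M_s^*\Sigma_{2d})\epsilon$. Thus $\hat{A}$ is Hermitian.

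The block multiplications are routine bookkeeping; the one genuinely delicate step is the passage, in part (ii), from the relation $\hat{\Sigma}_1\hat{K}^*=\hat{K}\hat{\Sigma}_1$ delivered by the $(1,1)$ block to the exact Hermitian condition $\hat{K}^*\hat{\Sigma}_1=\hat{\Sigma}_1\hat{K}$, which is what forces one first to extract the unitarity of $\hat{K}$ from $\hat{L}=O$ (handling the standard and infinitesimal parts separately) before inverting $\hat{K}$.
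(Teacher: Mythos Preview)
Your proof is correct and follows exactly the approach of the paper: express $\hat{A}$, $\hat{A}^*$ and $\hat{A}^N$ via the Hartwig--Spindelb\"ock decomposition, compare blocks, and invoke Theorem~\ref{thm:eqv cond 1}(i). The paper's proof only treats (i) and in less detail (simply asserting the four consequences $\hat{L}=\Sigma_{2d}M_s=\Sigma_{2d}N_s=0$ and $\hat{K}^*\hat{\Sigma}_1=\hat{\Sigma}_1\hat{K}$), leaving (ii) to the reader; your argument for (ii), including the step of extracting the unitarity of $\hat{K}$ from $\hat{L}=O$ to pass from $\hat{\Sigma}_1\hat{K}^*=\hat{K}\hat{\Sigma}_1$ to $\hat{K}^*\hat{\Sigma}_1=\hat{\Sigma}_1\hat{K}$, fills that gap correctly.
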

\begin{proof}
By the representation of $\hat{A}$ and $\hat{A}^N$ as in Theorem \ref{thm:DHS dec}, we can find 
 $$\hat{A}\hat{A}\hat{A}^N=\hat{U}\begin{bmatrix}
\hat{\Sigma}_{1} \hat{K} &  O\\
\hat{\Sigma}_{2} \hat{M} & O
\end{bmatrix}\hat{U}^*$$ 
and $$\hat{A}^*=\hat{U}\begin{bmatrix}
    (\hat{\Sigma}_{1}\hat{K})^* & (\hat{\Sigma}_{2}\hat{M})^*\\
    (\hat{\Sigma}_{1}\hat{L})^* & (\hat{\Sigma}_{2}\hat{N})^*
\end{bmatrix}\hat{U}^*.$$
 $\hat{A}\hat{A}\hat{A}^N=\hat{A}^*$ implies that $\hat{L}=\Sigma_{2d}M_s=\Sigma_{2d}N_s=0$ and  $\hat{K}^*\hat{\Sigma}_{1}=\hat{\Sigma}_{1}\hat{K},$ which shows that $\hat{A}$ is Hermitian using Theorem \ref{thm:eqv cond 1} (i).
\end{proof}
Similar to the above theorem, we can prove the following result.
                                           \begin{theorem}
Let $\hat{A}\in \mathbb{DC}^{n \times n}$. Then, each one of the following conditions implies that $\hat{A}$ is normal 
                                      \begin{enumerate}[(i)]
\item $\hat{A}\hat{A}^*\hat{A}^N=\hat{A}^*;$
\item $\hat{A}^N\hat{A}^*\hat{A}=\hat{A}^*;$
\item $A_e\hat{A}^*A_e^{\#}=\hat{A}^*A_e^{\#}A_e;$
\item $A_e\hat{A}^*A_e^{\#}=A_e^{\#}A_e\hat{A}^*.$
\end{enumerate}
\end{theorem}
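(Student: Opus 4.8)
The plan is to follow the template of the preceding theorems almost word for word. For each of (i)--(iv) I substitute into the hypothesis the Hartwig-Spindelb\"ock block forms of $\hat{A}$, $\hat{A}^{*}$, $\hat{A}^{N}$ used in the proof of Theorem~\ref{thm:eqv cond 1}, together with the forms of $A_{e}$ and $A_{e}^{\#}$ from Theorem~\ref{thm:DHS dec} and the theorem following it, carry out the $2\times 2$ block multiplications, and compare blocks. In every case the goal is to derive the two relations $\hat{L}=O$ and $\hat{\Sigma}_{1}\hat{K}=\hat{K}\hat{\Sigma}_{1}$, which by Theorem~\ref{thm:eqv cond 1}(iii) is precisely normality of $\hat{A}$. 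The recurring computations are $\hat{A}\hat{A}^{*}=A_{e}\hat{A}^{*}=\hat{U}\begin{bmatrix}\hat{\Sigma}_{1}^{2} & O\\ O & O\end{bmatrix}\hat{U}^{*}$ (using $\hat{K}\hat{K}^{*}+\hat{L}\hat{L}^{*}=I_{r}$, $\hat{K}\hat{M}^{*}+\hat{L}\hat{N}^{*}=O$, and $\hat{\Sigma}_{2}X\hat{\Sigma}_{2}=O$ since the diagonal of $\hat{\Sigma}_{2}$ is infinitesimal), the expression for $\hat{A}^{*}\hat{A}$ already recorded in the proof of Theorem~\ref{thm:eqv cond 1}(iii), and $A_{e}^{\#}A_{e}=\hat{U}\begin{bmatrix}I_{r} & \hat{K}^{-1}\hat{L}\\ O & O\end{bmatrix}\hat{U}^{*}$.

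For (i): $\hat{A}\hat{A}^{*}\hat{A}^{N}=\hat{U}\begin{bmatrix}\hat{\Sigma}_{1}^{2}\hat{K}^{*}\hat{\Sigma}_{1}^{-1} & O\\ O & O\end{bmatrix}\hat{U}^{*}$; equating with $\hat{A}^{*}$, the $(2,1)$ block gives $\hat{L}^{*}\hat{\Sigma}_{1}=O$, so $\hat{L}=O$ and $\hat{K}$ is unitary, while the $(1,1)$ block gives $\hat{\Sigma}_{1}^{2}\hat{K}^{*}=\hat{K}^{*}\hat{\Sigma}_{1}^{2}$, whence (post-multiply by $\hat{K}$ and take positive square roots, as in Theorem~\ref{thm:eqv cond 1}(iii)) $\hat{\Sigma}_{1}\hat{K}=\hat{K}\hat{\Sigma}_{1}$. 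For (ii): writing $\hat{A}^{N}(\hat{A}^{*}\hat{A})=\hat{A}^{*}$ block-wise and combining rows (pre-multiply the first-row identities by $\hat{K}$ and the second-row ones by $\hat{L}$, add, use $\hat{K}\hat{K}^{*}+\hat{L}\hat{L}^{*}=I_{r}$) yields $\hat{K}^{*}\hat{\Sigma}_{1}^{2}\hat{K}=\hat{\Sigma}_{1}^{2}$ and $\hat{K}^{*}\hat{\Sigma}_{1}^{2}\hat{L}=O$; the first makes $\hat{K}$ invertible over $\mathbb{DC}$ (a square dual matrix with a one-sided inverse is invertible), so the second forces $\hat{L}=O$, and the square-root step again gives $\hat{\Sigma}_{1}\hat{K}=\hat{K}\hat{\Sigma}_{1}$. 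For (iii): from the $(2,1)$ block of $A_{e}\hat{A}^{*}A_{e}^{\#}=\hat{A}^{*}A_{e}^{\#}A_{e}$ one reads $O=\hat{L}^{*}\hat{\Sigma}_{1}$, so $\hat{L}=O$ and $\hat{K}$ is unitary, and the $(1,1)$ block becomes $\hat{\Sigma}_{1}^{2}\hat{K}^{-1}=\hat{K}^{-1}\hat{\Sigma}_{1}^{2}$ (using $\hat{K}^{*}=\hat{K}^{-1}$), i.e. $\hat{K}$ commutes with $\hat{\Sigma}_{1}^{2}$, hence with $\hat{\Sigma}_{1}$ by the same square-root argument.

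The case (iv) is the genuine obstacle, since now both $A_{e}\hat{A}^{*}A_{e}^{\#}$ and $A_{e}^{\#}A_{e}\hat{A}^{*}$ have zero second block row, so there is no cheap $(2,1)$ entry producing $\hat{L}=O$. From the $(1,1)$ block, $\hat{\Sigma}_{1}^{2}\hat{K}^{-1}\hat{\Sigma}_{1}^{-1}=\hat{K}^{*}\hat{\Sigma}_{1}+\hat{K}^{-1}\hat{L}\hat{L}^{*}\hat{\Sigma}_{1}$; post-multiplying by $\hat{\Sigma}_{1}^{-1}$, pre-multiplying by $\hat{K}$, and using $\hat{K}\hat{K}^{*}+\hat{L}\hat{L}^{*}=I_{r}$ gives $\hat{K}\hat{\Sigma}_{1}^{2}=\hat{\Sigma}_{1}^{2}\hat{K}$; comparing $(i,j)$ entries, $(\hat{\mu}_{i}^{2}-\hat{\mu}_{j}^{2})\hat{K}_{ij}=0$, and since every $\hat{\mu}_{i}+\hat{\mu}_{j}$ is positive appreciable, hence invertible, this upgrades to $(\hat{\mu}_{i}-\hat{\mu}_{j})\hat{K}_{ij}=0$, that is, $\hat{\Sigma}_{1}\hat{K}=\hat{K}\hat{\Sigma}_{1}$. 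Feeding this commutation into the $(1,2)$ block turns it into $\hat{\Sigma}_{1}\hat{K}^{-2}\hat{L}=\hat{M}^{*}\hat{\Sigma}_{2}+\hat{K}^{-1}\hat{L}\hat{N}^{*}\hat{\Sigma}_{2}$; pre-multiplying by $\hat{K}^{2}\hat{\Sigma}_{1}^{-1}$ and substituting $\hat{K}\hat{M}^{*}=-\hat{L}\hat{N}^{*}$ (from $\hat{K}\hat{M}^{*}+\hat{L}\hat{N}^{*}=O$), the two terms on the right cancel and leave $\hat{L}=O$. In each of the four cases Theorem~\ref{thm:eqv cond 1}(iii) then gives that $\hat{A}$ is normal.

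Apart from the block-matrix bookkeeping, the two points I would treat with care are: (a) the implication $\hat{K}^{*}\hat{\Sigma}_{1}^{2}\hat{K}=\hat{\Sigma}_{1}^{2}\Rightarrow\hat{K}^{*}\hat{\Sigma}_{1}\hat{K}=\hat{\Sigma}_{1}$, which uses that $\hat{K}$ is unitary at that stage and uniqueness of the positive dual square root; and (b) the extraction of $\hat{L}=O$ in (iv), which really does need \emph{both} structural constraints $\hat{K}\hat{K}^{*}+\hat{L}\hat{L}^{*}=I_{r}$ and $\hat{K}\hat{M}^{*}+\hat{L}\hat{N}^{*}=O$. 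I would also remark that in (iii)--(iv) the very appearance of $A_{e}^{\#}$ in the hypothesis already forces $\hat{K}$ to be invertible, which is what legitimizes the displayed formula for $A_{e}^{\#}$.
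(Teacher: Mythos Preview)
Your proposal is correct and follows exactly the approach the paper indicates: the paper gives no proof of this theorem beyond the remark ``Similar to the above theorem, we can prove the following result,'' and your block-computation via the Hartwig--Spindelb\"ock form, reducing each item to $\hat{L}=O$ and $\hat{\Sigma}_{1}\hat{K}=\hat{K}\hat{\Sigma}_{1}$ and then invoking Theorem~\ref{thm:eqv cond 1}(iii), is precisely that template. Your treatment of~(iv) (the entrywise factoring $(\hat{\mu}_{i}^{2}-\hat{\mu}_{j}^{2})=(\hat{\mu}_{i}+\hat{\mu}_{j})(\hat{\mu}_{i}-\hat{\mu}_{j})$ to pass from $\hat{K}\hat{\Sigma}_{1}^{2}=\hat{\Sigma}_{1}^{2}\hat{K}$ to $\hat{K}\hat{\Sigma}_{1}=\hat{\Sigma}_{1}\hat{K}$ without first knowing $\hat{K}$ unitary, followed by the cancellation $\hat{K}\hat{M}^{*}+\hat{L}\hat{N}^{*}=O$) is in fact more careful than anything the paper spells out.
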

Several necessary and sufficient conditions are provided here for a dual complex matrix to be normal.
                                                \begin{theorem}
Let $\hat{A}\in \mathbb{DC}^{n \times n}$. Then, the following are equivalent:
                          \begin{enumerate}[(i)]
\item $\hat{A}$ is normal;
\item $\hat{A}^*A_e^{\#}=A_e^{\#}\hat{A}^*;$
\item $A_e\hat{A}^*\hat{A}^N=\hat{A}^NA_e\hat{A}^*;$
\item $A_eA_e^{\#}\hat{A}^*=A_e^{\#}\hat{A}^*A_e;$
\item $\hat{A}^*\hat{A}A_e^{\#}=A_e^{\#}\hat{A}^*\hat{A};$
\item $\hat{A}^*\hat{A}^NA_e^{\#}=A_e^{\#}\hat{A}^*\hat{A}^N.$
\item $\hat{A}^*A_e^{\#}\hat{A}^N=\hat{A}^N\hat{A}^*A_e^{\#}.$
\end{enumerate}
\end{theorem}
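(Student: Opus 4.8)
The plan is to carry out every computation inside the Hartwig--Spindelb\"ock coordinates of Theorem~\ref{thm:DHS dec}. I would recall the block forms of $\hat A$, $A_e$, $\hat A^{N}$ and $A_e^{\#}$ relative to the unitary $\hat U$ from Theorem~\ref{thm:DHS dec} and from the theorem following it, and keep in hand the structural relations $\hat K\hat K^{*}+\hat L\hat L^{*}=I_r$, $\hat K\hat M^{*}+\hat L\hat N^{*}=O$, together with $\hat\Sigma_1^{*}=\hat\Sigma_1$ (since $\hat\Sigma_1$ is a real dual diagonal matrix), $\hat\Sigma_2^{2}=O$ (every diagonal entry of $\hat\Sigma_2$ is infinitesimal), and the invertibility of $\hat K$, which is exactly the condition under which $A_e^{\#}$ exists and is therefore a standing hypothesis here. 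A few products recur, and I would record them first:
\begin{align*}
A_e\hat A^{*}&=\hat U\begin{bmatrix}\hat\Sigma_1^{2}&O\\ O&O\end{bmatrix}\hat U^{*},\qquad
A_eA_e^{\#}=A_e^{\#}A_e=\hat U\begin{bmatrix}I_r&\hat K^{-1}\hat L\\ O&O\end{bmatrix}\hat U^{*},\\
\hat A^{*}\hat A&=\hat U\begin{bmatrix}\hat K^{*}\hat\Sigma_1^{2}\hat K&\hat K^{*}\hat\Sigma_1^{2}\hat L\\ \hat L^{*}\hat\Sigma_1^{2}\hat K&\hat L^{*}\hat\Sigma_1^{2}\hat L\end{bmatrix}\hat U^{*},
\end{align*}
where the last two also use $\hat K^{-1}\hat L\hat L^{*}=\hat K^{-1}-\hat K^{*}$ and $\hat M^{*}=-\hat K^{-1}\hat L\hat N^{*}$. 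With these, each of the seven conditions unfolds into a pair of $2\times2$ block equalities after a short calculation.

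I would use Theorem~\ref{thm:eqv cond 1}(iii) as the hub, proving ``(i)$\Rightarrow$(ii)$\wedge\cdots\wedge$(vii)'' and then ``(ii)$\Rightarrow$(i)'', \dots, ``(vii)$\Rightarrow$(i)''. For the forward implication, by Theorem~\ref{thm:eqv cond 1}(iii) normality of $\hat A$ means $\hat L=O$ and $\hat\Sigma_1\hat K=\hat K\hat\Sigma_1$, which in addition forces $\hat K$ to be unitary (from $\hat K\hat K^{*}=I_r$) and $\hat M=O$ (from $\hat K\hat M^{*}=O$); substituting $\hat L=O$, $\hat K^{-1}=\hat K^{*}$, $\hat M=O$ and $\hat\Sigma_1\hat K=\hat K\hat\Sigma_1$ into the two sides of each of (ii)--(vii) and simplifying shows they agree. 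This step is routine.

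Each converse has the same two moves. First I would read off $\hat L=O$ from one off-diagonal block of the given identity, after cancelling an invertible factor that is common to, or isolated on, the relevant side: in (ii), (iii) and (v) the $(2,1)$ block reduces to $\hat L^{*}\hat\Sigma_1\cdot(\text{invertible})=O$; in (iv) and (vii) the $(1,2)$ block reduces to $(\text{invertible})\cdot\hat\Sigma_1^{2}\hat L=O$ and $(\text{invertible})\cdot\hat L=O$ respectively; the delicate case is (vi), whose $(2,1)$ block reads $\hat L^{*}\hat\Sigma_1\hat K^{*}+\hat N^{*}\hat\Sigma_2\hat L^{*}=O$, so one passes to standard parts to get $L_s^{*}\Sigma_{1s}K_s^{*}=O$, hence $L_s=O$, which makes $\hat N^{*}\hat\Sigma_2\hat L^{*}$ vanish (it is then an $\epsilon^{2}$-term), and the remaining dual part gives $L_d=O$. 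Once $\hat L=O$, the matrix $\hat K$ is unitary, and a diagonal block of the same identity collapses, after pre-/post-multiplication by $\hat K^{\pm1}$ and $\hat\Sigma_1^{\pm1}$, to $\hat\Sigma_1^{2}\hat K=\hat K\hat\Sigma_1^{2}$; since $\hat K$ is unitary, $(\hat K^{*}\hat\Sigma_1\hat K)^{2}=\hat K^{*}\hat\Sigma_1^{2}\hat K=\hat\Sigma_1^{2}$, so by uniqueness of the positive square root of the positive appreciable dual diagonal matrix $\hat\Sigma_1$ we obtain $\hat K^{*}\hat\Sigma_1\hat K=\hat\Sigma_1$, i.e.\ $\hat\Sigma_1\hat K=\hat K\hat\Sigma_1$, and Theorem~\ref{thm:eqv cond 1}(iii) then delivers (i).

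The main obstacle is bookkeeping rather than anything conceptual: each of the seven identities expands into several three-factor block products, and one must be disciplined about which block carries the information $\hat L=O$ and which carries the commutation $\hat\Sigma_1\hat K=\hat K\hat\Sigma_1$, and careful in case (vi) (and wherever an $\hat N^{*}\hat\Sigma_2$ term stands next to an $\hat L$ term) to peel off the genuinely informative standard part from the infinitesimal $\hat\Sigma_2$-contributions, which die once $L_s=O$ because $\hat\Sigma_2^{2}=O$. It would also be worth stating explicitly, as a standing assumption of this section, that $A_e^{\#}$ --- equivalently $\hat K^{-1}$ --- exists precisely when $K_s$ is nonsingular, since nearly every cancellation above multiplies through by $\hat K^{-1}$.
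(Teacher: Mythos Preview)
Your proposal is correct and matches the paper's approach: the paper does not write out a proof for this particular theorem, but its proofs of the surrounding results (Theorems~\ref{thm:eqv cond 1} and the theorem on sufficient conditions for Hermiticity) proceed exactly as you describe---expand both sides in the Hartwig--Spindelb\"ock block form, read off $\hat L=O$ from an off-diagonal block, then extract $\hat\Sigma_1\hat K=\hat K\hat\Sigma_1$ from a diagonal block via the positive-square-root argument---so your plan is precisely the intended one. Your remark that the existence of $A_e^{\#}$ (equivalently the invertibility of $\hat K$) is a standing hypothesis is a useful clarification that the paper leaves implicit.
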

The next theorem characterizes new dual $EP$-matrices.
\begin{theorem}
Let $\hat{A}\in \mathbb{DC}^{n \times n}$. Then, the following are equivalent:
                          \begin{enumerate}[(i)]
\item $\hat{A}$ is new dual EP;
\item $\hat{A}\hat{A}^N\hat{A}^*=\hat{A}^*\hat{A}\hat{A}^N;$
\item $\hat{A}^*\hat{A}^N\hat{A}=\hat{A}^N\hat{A}\hat{A}^*;$
\item $\hat{A}^N\hat{A}^N=\hat{A}^N A_e^{\#};$
\item $\hat{A}^N\hat{A}^N=A_e^{\#}\hat{A}^N;$
\item $\hat{A}^N\hat{A}^N=A_e^{\#}A_e^{\#};$
\item $A_e^{\#}\hat{A}^N=A_e^{\#}A_e^{\#};$
\item $\hat{A}^NA_e^{\#}=A_e^{\#}A_e^{\#};$
\item $\hat{A}^NA_e^{\#}=A_e^{\#}\hat{A}^N;$ 
\item $\hat{A}^N\hat{A}^NA_e^{\#}=\hat{A}^NA_e^{\#}\hat{A}^N;$ 
\item $\hat{A}^N\hat{A}^NA_e^{\#}=A_e^{\#}\hat{A}^N\hat{A}^N;$ 
\item $\hat{A}^NA_e^{\#}\hat{A}^N=A_e^{\#}\hat{A}^N\hat{A}^N;$ 
\item $\hat{A}^NA_e^{\#}A_e^{\#}=A_e^{\#}\hat{A}^NA_e^{\#};$ 
\item $\hat{A}^NA_e^{\#}A_e^{\#}=A_e^{\#}A_e^{\#}\hat{A}^N;$
\item $A_e^{\#}A_e^{\#}\hat{A}^N=A_e^{\#}\hat{A}^NA_e^{\#};$
\end{enumerate}
\end{theorem}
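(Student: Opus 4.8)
The plan is to collapse all fifteen statements onto the single condition $\hat{L}=O$ on the Hartwig-Spindelb\"ock blocks of $\hat{A}$. By Theorem~\ref{thm:eqv cond 1}(ii), $\hat{A}$ is new dual EP if and only if $\hat{L}=O$, so it suffices to prove that each of (ii)--(xv) is equivalent to $\hat{L}=O$. One works throughout with the block representations of $\hat{A}$, $\hat{A}^*$, $\hat{A}^N$ and $A_e^{\#}$ from Theorem~\ref{thm:DHS dec} and the theorem following it, using repeatedly that $\hat{\Sigma}_1$ is invertible and appreciable, that $\hat{K}$ is invertible (which is exactly what makes $A_e^{\#}$ exist and is already used in the theorem following Theorem~\ref{thm:DHS dec}), that $\hat{\Sigma}_2^{2}=O$ because $\hat{\Sigma}_2$ is infinitesimal, and that $\hat{K}\hat{K}^*+\hat{L}\hat{L}^*=I_r$ and $\hat{K}\hat{M}^*+\hat{L}\hat{N}^*=O$. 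In addition to the identities $\hat{A}\hat{A}^N=\hat{U}\begin{bmatrix} I_r & O\\ O & O\end{bmatrix}\hat{U}^*$ and $\hat{A}\hat{A}^*=\hat{U}\begin{bmatrix} \hat{\Sigma}_1^{2} & O\\ O & O\end{bmatrix}\hat{U}^*$ already obtained in the proof of Theorem~\ref{thm:eqv cond 1}, it is convenient to record
\[
A_e^{\#}\hat{A}^N=\hat{U}\begin{bmatrix} R^{2} & O\\ O & O\end{bmatrix}\hat{U}^*,\qquad R:=\hat{K}^{-1}\hat{\Sigma}_1^{-1},
\]
which follows from $\hat{K}\bigl(\hat{K}^*+\hat{K}^{-1}\hat{L}\hat{L}^*\bigr)=\hat{K}\hat{K}^*+\hat{L}\hat{L}^*=I_r$.

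Suppose first that $\hat{L}=O$. Then $\hat{K}\hat{M}^*=O$ forces $\hat{M}=O$, and $\hat{K}\hat{K}^*=I_r$ forces the square matrix $\hat{K}$ to be unitary; comparing the explicit formulas for $\hat{A}^N$ and $A_e^{\#}$ then gives $\hat{A}^N=A_e^{\#}$. Consequently every one of (iv)--(xv), each of which equates two words of the same length in the two letters $\hat{A}^N$ and $A_e^{\#}$, holds identically. For (ii) and (iii) a single $2\times2$ block multiplication shows that both sides of each equation reduce to $\hat{U}\begin{bmatrix} \hat{K}^*\hat{\Sigma}_1 & O\\ O & O\end{bmatrix}\hat{U}^*$.

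Conversely, suppose one of (ii)--(xv) holds. The approach is to substitute the block forms into the given equation and compare the four $2\times2$ blocks. In the large majority of cases one of the off-diagonal or lower blocks reduces to an equation of the form $X\hat{L}=O$ or $\hat{L}^*X=O$ with $X$ invertible, or to $YS=O$ or $TQ=O$ with $Y,T$ invertible, where $P:=\hat{K}^*\hat{\Sigma}_1^{-1}$, $Q:=\hat{L}^*\hat{\Sigma}_1^{-1}$ and $S:=\hat{K}^{-1}\hat{\Sigma}_1^{-1}\hat{K}^{-1}\hat{L}$; any of these forces $\hat{L}=O$ at once. The three cases that resist a bare zero-block reading are (iii), (xii) and (xv). In (xii) and (xv) the block that must vanish is $Q(RP+SQ)$, respectively $(RP+SQ)S$; there one substitutes the recorded identity $RP+SQ=R^{2}$ to expose an invertible factor and conclude $Q=O$, respectively $S=O$, hence $\hat{L}=O$. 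In (iii) the surviving block is $\bigl(\hat{K}^*\hat{\Sigma}_1\hat{K}^*+\hat{M}^*\hat{\Sigma}_2\hat{L}^*\bigr)\hat{L}=O$; since $\hat{M}^*\hat{\Sigma}_2\hat{L}^*$ is infinitesimal, the matrix in parentheses is an appreciable dual matrix whose standard part $K_s^*\Sigma_{1s}K_s^*$ is invertible, hence it is itself invertible and $\hat{L}=O$ once more (equivalently, one separates standard and dual parts, the infinitesimality of $\hat{\Sigma}_2$ annihilating the mixed term).

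I anticipate the main obstacle to be organizational rather than conceptual: fifteen $2\times2$ block products must be carried out without slips, and in the three awkward cases one has to notice that the offending block factors through an invertible matrix. The only genuinely non-formal ingredient is the identity $RP+SQ=R^{2}$ (equivalently, that the $(1,1)$-block of $A_e^{\#}\hat{A}^N$ equals $R^{2}$): this is the precise place where the orthogonality relation $\hat{K}\hat{K}^*+\hat{L}\hat{L}^*=I_r$ is used, and it is what makes statements (xii) and (xv) close.
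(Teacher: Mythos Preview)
Your proposal is correct and follows essentially the same route as the paper: reduce every condition to $\hat{L}=O$ via the Hartwig--Spindelb\"ock block forms and Theorem~\ref{thm:eqv cond 1}(ii). The paper in fact only works out (i)$\Leftrightarrow$(ii) explicitly and dispatches the remaining thirteen equivalences with ``similarly'', so your sketch---especially the identity $RP+SQ=R^{2}$ that makes (xii) and (xv) close, and the invertibility argument in (iii)---supplies considerably more detail than the paper itself.
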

\begin{proof}
First, we prove that (i) and (ii) are equivalent.
By the representation of $\hat{A}$ and $\hat{A}^N$ as in Theorem \ref{thm:DHS dec}, we can find 
$$\hat{A}\hat{A}^N\hat{A}^*=\hat{U}\begin{bmatrix}
   \hat{K}^* \hat{\Sigma}_{1} &  \hat{M}^* \hat{\Sigma}_{2}\\
    O & O
\end{bmatrix}\hat{U}^*$$
 and 
 $$\hat{A}^*\hat{A}\hat{A}^N=\hat{U}\begin{bmatrix}
   \hat{K}^* \hat{\Sigma}_{1} &  O\\
    \hat{L}^* \hat{\Sigma}_{1} & O
\end{bmatrix}\hat{U}^*.$$
Therefore, we have $\hat{A}\hat{A}^N\hat{A}^*=\hat{A}^*\hat{A}\hat{A}^N$ $\Leftrightarrow$ $\hat{L}=O$ $\Leftrightarrow$ $\hat{A}$ is a new dual EP matrix. The last equivalence follows from Theorem \ref{thm:eqv cond 1} (ii). Thus, (i)$\Leftrightarrow$(ii). Similarly, we can prove the remaining equivalent conditions.    
\end{proof}
\begin{theorem}
   Let $\hat{A}\in \mathbb{DC}^{n \times n}$ be an EP-matrix. Then,
   $\hat{A}$ is normal if and only if $ \hat{\Sigma}_{1}\hat{K}=\hat{K} \hat{\Sigma}_{1}.$   
\end{theorem}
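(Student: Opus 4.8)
The plan is to obtain the statement as an immediate consequence of Theorem~\ref{thm:eqv cond 1}, parts (ii) and (iii); essentially no new computation is needed. First I would clarify the hypothesis: ``EP-matrix'' is to be read as \emph{new dual EP} in the sense of Definition~\ref{def:new DEP}, i.e.\ $\hat{A}\hat{A}^{N}=\hat{A}^{N}\hat{A}$. This is the EP-type property available for an arbitrary $\hat{A}\in\mb{DC}^{n\times n}$ (recall that $\hat{A}^{\dg}$ need not exist), and it coincides with the dual EP property whenever $\hat{A}^{\dg}$ exists, by the remark following Definition~\ref{def:new DEP}.

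The argument then has two steps, both using the Hartwig--Spindelb\"ock form of $\hat{A}$ from Theorem~\ref{thm:DHS dec}, with blocks $\hat{\Sigma}_{1}\hat{K}$, $\hat{\Sigma}_{1}\hat{L}$, $\hat{\Sigma}_{2}\hat{M}$, $\hat{\Sigma}_{2}\hat{N}$. \emph{Step 1:} since $\hat{A}$ is new dual EP, Theorem~\ref{thm:eqv cond 1}(ii) forces $\hat{L}=O$. \emph{Step 2:} by Theorem~\ref{thm:eqv cond 1}(iii), $\hat{A}$ is normal if and only if $\hat{L}=O$ \emph{and} $\hat{\Sigma}_{1}\hat{K}=\hat{K}\hat{\Sigma}_{1}$. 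Under the standing EP hypothesis the first of these two conditions holds automatically by Step~1, so the criterion collapses to: $\hat{A}$ is normal $\iff$ $\hat{\Sigma}_{1}\hat{K}=\hat{K}\hat{\Sigma}_{1}$, which is exactly the assertion (the forward implication is the relevant half of Theorem~\ref{thm:eqv cond 1}(iii), and the converse combines $\hat{\Sigma}_{1}\hat{K}=\hat{K}\hat{\Sigma}_{1}$ with the already known $\hat{L}=O$ and reapplies Theorem~\ref{thm:eqv cond 1}(iii)).

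I do not anticipate a real obstacle; the only subtlety is to notice that the normality criterion of Theorem~\ref{thm:eqv cond 1}(iii) is a \emph{conjunction}, one clause of which is precisely the EP condition (Theorem~\ref{thm:eqv cond 1}(ii)), so the present theorem is just that criterion restated for EP matrices. Should a self-contained proof be preferred, I would substitute $\hat{L}=O$ directly into the block products $\hat{A}\hat{A}^{*}$ and $\hat{A}^{*}\hat{A}$: using $\hat{K}\hat{K}^{*}=I_{r}$ and $\hat{K}\hat{M}^{*}=O$ (the decomposition relations with $\hat{L}=O$), together with the vanishing of any product of two entries of $\hat{\Sigma}_{2}$ (all infinitesimal), one finds $\hat{A}\hat{A}^{*}=\hat{U}\,\mathrm{diag}(\hat{\Sigma}_{1}^{2},O)\,\hat{U}^{*}$ and $\hat{A}^{*}\hat{A}=\hat{U}\,\mathrm{diag}(\hat{K}^{*}\hat{\Sigma}_{1}^{2}\hat{K},O)\,\hat{U}^{*}$; equating them gives $\hat{\Sigma}_{1}^{2}=\hat{K}^{*}\hat{\Sigma}_{1}^{2}\hat{K}$, and taking square roots (the diagonal of $\hat{\Sigma}_{1}$ being positive appreciable) and using the unitarity of $\hat{K}$ yields $\hat{\Sigma}_{1}\hat{K}=\hat{K}\hat{\Sigma}_{1}$, and conversely.
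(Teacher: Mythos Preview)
Your proposal is correct and matches the paper's treatment: the paper states this theorem without proof, evidently regarding it as an immediate corollary of Theorem~\ref{thm:eqv cond 1}(ii) and~(iii), which is precisely the derivation you give. Your clarification that ``EP-matrix'' should be read as \emph{new dual EP} is appropriate here, and the optional self-contained computation you sketch is a faithful reprise of the calculation already carried out in the proof of Theorem~\ref{thm:eqv cond 1}(iii).
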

                                            \section{Conclusion}\label{sec:cncln}
The essential determinations of the article are summarized as follows:
\begin{itemize}
\item Using the singular value decomposition, the Hartwig-Spindelb\" ock decomposition is provided for dual complex matrices, which is then used to give a new representation of the NDMPI of a dual matrix and the group inverse of its essential part.
\item Several necessary and sufficient conditions are provided for a dual complex matrix to be Hermitian, normal, new dual EP.
\end{itemize}
                                          \section*{Acknowledgements}
The first author acknowledges the support of the Council of Scientific and Industrial Research, India. 
                                          \section*{Conflict of Interest}
 The authors declare that there is no conflict of interest.
                                               \section*{Data Availability Statement}
Data sharing is not applicable to this article as no new data is analyzed in this study.
                                       \bibliographystyle{amsplain}
                                             
\end{document}